\documentclass[final,3p,times,twocolumn]{elsarticle}


\usepackage{amsmath, amssymb, amsthm, bm, nccmath}
\usepackage{algorithmic}
\usepackage{graphicx}
\usepackage{textcomp}
\usepackage{xcolor}
\biboptions{sort&compress}

\usepackage{optidef}
\def\matt#1{\begin{bmatrix}#1\end{bmatrix}}


\newtheorem{theorem}{Theorem}
\newtheorem{corollary}{Corollary}
\newtheorem{lemma}{Lemma}
\newtheorem{assumption}{Assumption}

\newtheorem{remark}{Remark}
\newtheorem{definition}{Definition}



\journal{Systems and Control Letters}

\begin{document}
\begin{frontmatter}

\title{
Distributed Optimization under Edge Agreements: A Continuous-Time Algorithm
}

\author[inst1]{Zehui Lu\corref{cor1}}
\ead{lu846@purdue.edu}
\affiliation[inst1]{organization={School of Aeronautics and Astronautics, Purdue University},
addressline={701 W Stadium Ave.},
postcode={47907},
city={West Lafayette, IN},
country={USA}}
\author[inst1]{Shaoshuai Mou}
\ead{mous@purdue.edu}
\cortext[cor1]{Corresponding author}

\begin{abstract}
Generalized from the concept of consensus, this paper considers a group of edge agreements, i.e. constraints defined for neighboring agents, in which each pair of neighboring agents is required to satisfy one edge agreement constraint. Edge agreements are defined locally to allow more flexibility than a global consensus. This work formulates a multi-agent optimization problem under edge agreements and proposes a continuous-time distributed algorithm to solve it. Both analytical proof and numerical examples are provided to validate the effectiveness of the proposed algorithm.
\end{abstract}

\begin{keyword}
Multi-Agent Systems \sep Distributed Optimization \sep Edge Agreements
\end{keyword}

\end{frontmatter}

\section{Introduction} \label{section:introduction}

There has recently been a significant amount of research interest in distributed algorithms for networked multi-agent systems (MAS), which focus on accomplishing global objectives via only local coordination through the network. To guarantee all agents in a networked MAS work as a cohesive whole, the concept of consensus that requires all agents to reach an agreement regarding a certain quantity \cite{cao2008reaching,cao2008agreeing}, has naturally arisen and started serving as a basis to develop distributed algorithms for MAS.
Besides multi-agent computation \cite{mou2015distributed, wang2019distributed, wang2019scalable,JXSB20TAC,ASB23CSL}, multi-agent optimal control \cite{movric2013cooperative, lu2022cooperative}, and multi-agent formation control \cite{sun2016exponential, mou2015undirected, chen2019controlling}, consensus-based distributed optimization for MAS \cite{nedic2009distributed,nedic2010constrained,gharesifard2013distributed,kia2015distributed,wang2022consensus,qu2019accelerated, tang2020distributed, tang2021communication,duchi2011dual, terelius2011decentralized, chang2014distributed} has recently attracted a lot of research attention, for which each agent knows one objective function and a local constraint, and the goal is to minimize the sum of local objective functions subject to all local constraints. Along this direction, primal-dual methods \cite{duchi2011dual, terelius2011decentralized, chang2014distributed} update and communicate both primal and dual variables to optimize the global objective function via dual decomposition. Based on augmented Lagrangian, the authors of \cite{chang2016proximal, falsone2017dual, aybat2017distributed} propose a distributed alternating direction method of multipliers (ADMM), which decomposes dual variables by primal decomposition.
Another family of distributed optimization methods comes from the integration of consensus with gradient descent method and projection operators to deal with objective functions and local constraints, respectively, such as \cite{nedic2009distributed, nedic2010constrained} with asymptotic convergence under a diminishing step size (e.g., $1/t$).
To further eliminate such diminishing step size and improve the convergence rate, the work \cite{gharesifard2013distributed, liu2015second, qiu2016distributed, zeng2016distributed} introduce a Lagrangian dual vector for consensus errors, and the results in  \cite{kia2015distributed, wang2022consensus} employ the integral of consensus error, which achieve an exponential convergence rate and are thus robust against bounded disturbance.
Some work \cite{qu2019accelerated, tang2020distributed, tang2021communication} focus on a faster convergence rate, non-convex and non-smooth optimization, and communication-efficient distributed algorithm, respectively.

With so many distributed algorithms developed based on consensus as mentioned above, they are designed only for scenarios when all agents need to reach the same value regarding a specific quantity.
Recognition of this has motivated the goal of this paper to investigate coordination among agents beyond consensus. Different from consensus, which enforces a global constraint to all agents in the whole MAS, we in this paper consider a group of \emph{edge agreements}, i.e. constraints defined for nearby neighboring agents, with each pair of neighboring agents corresponding to one such constraint.
Defined locally to allow more flexibility than a global consensus, edge agreements can deal with cases when heterogeneous coordination among neighbor agents is needed, such as multi-agent formation control in which each pair of neighbor agents needs to maintain a different distance constant.
The global consensus can be treated as a special case of edge agreements, as indicated in \cite{rai2023edge}, which has investigated distributed algorithms to achieve edge agreements for multi-agent systems in discrete time.

Motivated by the context above, this paper will develop a distributed algorithm for multi-agent optimization under edge agreements, for which all agents aim to minimize the sum of each agent's local objective function. In contrast, each pair of neighbor agents satisfies a prescribed edge agreement constraint.
Potential applications for multi-agent optimization under edge agreements include distributed model predictive control (MPC) \cite{stewart2010cooperative} and applying such techniques to battery network energy management \cite{fang2016cooperative}, where the equality constraint of linear dynamics for each agent and the consensus on control among different agents can be unified as edge-agreement constraints. The contribution of this paper is a distributed algorithm for multi-agent optimization under edge agreements in continuous time.

This paper is organized as follows: A mathematical problem formulation is given in Section \ref{section:problem_formulation} followed by some preliminary discussions in Section \ref{section:prelim}. A proposed distributed algorithm and some theoretical analysis are developed in Section \ref{section:method}, which deal with multi-agent optimization under edge agreements. Simulations on two examples of multi-agent formation control are given in Section \ref{section:example}.


\textbf{\emph{Notations.}}  Let $|\mathcal{N}|$ denote the cardinality of a set $\mathcal{N}$. Let $(\cdot)^\prime$ denote the Hermitian transpose. Let $||\cdot||$ denote the Euclidean norm.
Let $\text{col}\{ \boldsymbol{v}_1, \cdots, \boldsymbol{v}_a \}$ denote a column stack of elements $\boldsymbol{v}_1, \cdots, \boldsymbol{v}_a $, which may be scalars, vectors or matrices,
i.e. $\text{col}\{ \boldsymbol{v}_1, \cdots, \boldsymbol{v}_a \} \triangleq {\matt{{\boldsymbol{v}_1}^{\prime} & \cdots & {\boldsymbol{v}_a}^{\prime}}}^{\prime}$.
Let $\otimes$ denote the Kronecker product.
Let $I_n \in \mathbb{R}^{n \times n}$ denote an identity matrix in $\mathbb{R}^{n \times n}$.

\section{Problem Formulation} \label{section:problem_formulation}

Consider a multi-agent system consisting of $m$ agents labeled as $\mathcal{V}=\{1,\cdots,m\}$. Each agent $i$ is modeled by a single integrator, can control a state $\boldsymbol{x}_i \in \mathbb{R}^n$ and communicate bidirectionally within its nearby neighbors denoted by $\mathcal{N}_i$. Here, we assume $i \notin \mathcal{N}_i$. Let $\mathbb{G} = \{ \mathcal{V}, \mathcal{E} \}$ denote the undirected graph such that an undirected edge $(i,j) \in \mathcal{E} $ if and only if agent $i$ and agent $j$ are neighbors. Let $\bar{m} \triangleq |\mathcal{E}|$ the number of edges in $\mathbb{G}$. Suppose each agent $i$ only knows its private objective function $\boldsymbol{f}_i: \mathbb{R}^n \mapsto \mathbb{R}$.

The \textbf{problem of interest} to develop an update rule for each agent $i$ to update $\boldsymbol{x}_i \in \mathbb{R}^n$ such that each $\boldsymbol{x}_i$ converges to a constant vector, which minimizes the global sum of individual objective functions $\sum_{i=1}^{m} \boldsymbol{f}_i$ and satisfies the following edge agreements
\begin{equation} \label{eq:edge_agreement_require}
A_{ij}(\boldsymbol{x}_i-\boldsymbol{x}_j)=\boldsymbol{b}_{ij}, \ \forall (i,j) \in \mathcal{E}.
\end{equation}
That is,
\begin{equation} \label{problem_interest}
\begin{aligned}
{ \underset {{\{\boldsymbol{x}_i\}}_{i=1}^m} {\min} }  \quad & \textstyle\sum_{i=1}^{m} \boldsymbol{f}_i(\boldsymbol{x}_i)\\
\textrm{s.t.} \quad & A_{ij}(\boldsymbol{x}_i-\boldsymbol{x}_j)=\boldsymbol{b}_{ij}, \ \forall (i,j) \in \mathcal{E}.    \\
\end{aligned}
\end{equation}
Here, $A_{ij} \in \mathbb{R}^{d_{ij} \times n}$ and $\boldsymbol{b}_{ij} \in \mathbb{R}^{d_{ij}}$ are constant matrices, and privately known to agent $i$;
$d_{ij}$ is the dimension of edge agreement associated with edge $(i,j)$;
$\boldsymbol{f}_i: \mathbb{R}^n \mapsto \mathbb{R}$ is continuously differentiable and convex, which is privately known to agent $i$. Denote $\boldsymbol{f}(\boldsymbol{x}) \triangleq \sum_{i=1}^{m} \boldsymbol{f}_i(\boldsymbol{x}_i)$, where $\boldsymbol{x} \triangleq \text{col}\{ \boldsymbol{x}_1, \cdots, \boldsymbol{x}_m \} \in \mathbb{R}^{mn}$, and $\boldsymbol{f}:\mathbb{R}^{mn} \mapsto \mathbb{R}$.
$\boldsymbol{f}$ is assumed to be convex in $\mathbb{R}^{mn}$.

\begin{remark}
If the objective function $\textstyle\sum_{i=1}^{m} \boldsymbol{f}_i(\boldsymbol{x}_i)$ is a constant, i.e. there is no objective function to be optimized, the problem \eqref{problem_interest} is reduced to an edge-agreement-only problem \eqref{eq:edge_agreement_require} for multi-agent systems, which has been solved by two discrete-time distributed algorithms in \cite{rai2023edge}.
Moreover, when $\boldsymbol{b}_{ij} = \boldsymbol{0}$, the consensus is one solution to the edge agreements \eqref{eq:edge_agreement_require}, although edge agreements do not necessarily imply consensus. The matrices $A_{ij}$ and vectors $\boldsymbol{b}_{ij}$ in the edge agreements can enable more flexible coordination among multi-agent systems by enforcing a linear constraint to each pair of neighboring agents, rather than a global requirement of consensus in a MAS. Such characterization of edge agreements can be employed to define a multi-agent formation as shown later in simulations.
\end{remark}

\section{Preliminary Results} \label{section:prelim}
This section introduces some assumptions and graphical notations.
\begin{assumption} \label{assum:existence}
\emph{(\textbf{Existence})} There exist at least one solution $\boldsymbol{x}^* \in \mathbb{R}^{mn}$, $\boldsymbol{x}^* \triangleq \emph{col}\{ \boldsymbol{x}^*_1, \cdots, \boldsymbol{x}^*_m \}$, to the problem \eqref{problem_interest} that satisfy the edge agreements.
\end{assumption}

Assumption \ref{assum:existence} guarantees that the problem \eqref{problem_interest} to be solved has solutions. In addition, we make the following assumptions regarding the edge agreement in \eqref{eq:edge_agreement_require}. Moreover, 
\begin{assumption} \label{assum:consistency} \emph{(\textbf{Consistency})}
Given the undirected graph $\mathbb{G} = \{ \mathcal{V}, \mathcal{E} \}$, the linear constraints for edge agreements \eqref{eq:edge_agreement_require} are consistent with each other for each pair of neighboring agents $(i,j) \in \mathcal{E}$, i.e.
\begin{equation*}
    A_{ij} = A_{ji}, \  \boldsymbol{b}_{ij} = - \boldsymbol{b}_{ji}.
\end{equation*}
\end{assumption}
The edge agreements \eqref{eq:edge_agreement_require} are equivalent to some constraints using projection matrices. Let $P_{ij} \in \mathbb{R}^{n \times n}$ denote a projection matrix and $\bar{\boldsymbol{b}}_{ij} \in \mathbb{R}^n$ denote as follows
\begin{equation} \label{eq:mat_define}
    P_{ij} = A_{ij}^{\prime} (A_{ij}A_{ij}^{\prime})^{-1}A_{ij}, \  \bar{\boldsymbol{b}}_{ij} = A_{ij}^{\prime} (A_{ij}A_{ij}^{\prime})^{-1} \boldsymbol{b}_{ij}.
\end{equation}
Let
\begin{subequations}
\begin{align}
\bar{P} &= \text{diag} \{ P_{i_1, j_1}, P_{i_2, j_2}, \cdots, P_{i_{\bar{m}},  j_{\bar{m}}} \} \in \mathbb{R}^{\bar{m}n \times \bar{m}n}, \label{eq:diag_mat_define} \\
\bar{\boldsymbol{b}} &= \text{col} \{ \bar{\boldsymbol{b}}_{i_1, j_1}, \cdots, \bar{\boldsymbol{b}}_{i_l, j_l}, \cdots, \bar{\boldsymbol{b}}_{i_{\bar{m}},  j_{\bar{m}}}  \}  \in \mathbb{R}^{\bar{m}n}, \label{eq:edge_b_define}
\end{align}
\end{subequations}
where $(i_l, j_l)$ denotes the $l$-th edge of the graph $\mathbb{G}$. By \eqref{eq:mat_define} and \eqref{eq:diag_mat_define}, note that $\bar{P}^2 = \bar{P}$ and $\bar{P}^{\prime} = \bar{P}$. Based on the definition of $P_{ij}$ and $\bar{\boldsymbol{b}}_{ij}$ in \eqref{eq:mat_define} and the fact that $P_{ij} \bar{\boldsymbol{b}}_{ij} = \bar{\boldsymbol{b}}_{ij}$, the linear constraints in \eqref{eq:edge_agreement_require} for edge agreements are equivalent to
\begin{equation} \label{eq:edge_agreement_each_after}
    P_{ij} (\boldsymbol{x}_i-\boldsymbol{x}_j-\bar{\boldsymbol{b}}_{ij}) = \boldsymbol{0}, \ \forall (i,j) \in \mathcal{E}.
\end{equation}

For the $m$-node-$\bar{m}$-edge undirected graph $\mathbb{G}$, one defines the oriented incidence matrix of $\mathbb{G}$ denoted by $H \in \mathbb{R}^{\bar{m} \times m}$ such that its entry at the $k$-th row and the $j$-th column is 1 if edge $k$ is an incoming edge to node $j$; -1 if edge $k$ is an outgoing edge to node $j$; and 0 elsewhere.
Note that for undirected graphs, the direction for each edge could be arbitrary, as long as it is consistent with each $\boldsymbol{b}_{ij}$, i.e. satisfy Assumption \ref{assum:consistency}.
Based on \eqref{eq:edge_agreement_each_after}, the definitions of $H$ and $\bar{P}$, and
\begin{equation} \label{eq:H_bar_define}
    \bar{H} \triangleq H \otimes I_n,
\end{equation} Let $\boldsymbol{x} \triangleq \{ \boldsymbol{x}_1, \cdots, \boldsymbol{x}_m  \} \in \mathbb{R}^{mn}$. Then by the definition of $\bar{H}$ and $\bar{P}$,
one has the following lemma:
\begin{lemma} \label{lemma:edge_agreement}
The edge agreements in \eqref{eq:edge_agreement_require} are equivalent to be
\begin{equation} \label{eq:edge_agreement_each_after1}
    \bar{P}(\bar{H} \boldsymbol{x}- \bar{\boldsymbol{b}}) = \boldsymbol{0},
\end{equation}
where $\bar{P}$, $\bar{\boldsymbol{b}}$, and $\bar{H} \in \mathbb{R}^{\bar{m}n \times mn}$ are as defined in \eqref{eq:diag_mat_define}, \eqref{eq:edge_b_define}, and \eqref{eq:H_bar_define}, respectively.
\end{lemma}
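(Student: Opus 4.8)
The plan is to verify \eqref{eq:edge_agreement_each_after1} block by block and reduce it to the per-edge conditions \eqref{eq:edge_agreement_each_after}, which the discussion preceding the lemma has already shown to be equivalent to the original edge agreements \eqref{eq:edge_agreement_require}. First I would spell out the action of $\bar{H} = H \otimes I_n$ on $\boldsymbol{x} = \text{col}\{\boldsymbol{x}_1,\dots,\boldsymbol{x}_m\}$: since the $l$-th row of the oriented incidence matrix $H$ has a single $+1$ in the column of the head of edge $l$, a single $-1$ in the column of its tail, and zeros elsewhere, the $l$-th $n$-dimensional block of $\bar{H}\boldsymbol{x}$ equals $\boldsymbol{x}_{i_l}-\boldsymbol{x}_{j_l}$, where the orientation of edge $l$ is the one consistent with the labeling $(i_l,j_l)$ used for $\bar{\boldsymbol{b}}$ in \eqref{eq:edge_b_define}. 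Hence the $l$-th block of $\bar{H}\boldsymbol{x} - \bar{\boldsymbol{b}}$ is $\boldsymbol{x}_{i_l}-\boldsymbol{x}_{j_l}-\bar{\boldsymbol{b}}_{i_l,j_l}$.

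Next, since $\bar{P} = \text{diag}\{P_{i_1,j_1},\dots,P_{i_{\bar{m}},j_{\bar{m}}}\}$ is block diagonal, left multiplication by $\bar{P}$ acts separately on each $n$-block, so the $l$-th block of $\bar{P}(\bar{H}\boldsymbol{x}-\bar{\boldsymbol{b}})$ equals $P_{i_l,j_l}(\boldsymbol{x}_{i_l}-\boldsymbol{x}_{j_l}-\bar{\boldsymbol{b}}_{i_l,j_l})$. A stacked vector vanishes if and only if each of its blocks vanishes, so \eqref{eq:edge_agreement_each_after1} is equivalent to $P_{ij}(\boldsymbol{x}_i-\boldsymbol{x}_j-\bar{\boldsymbol{b}}_{ij}) = \boldsymbol{0}$ for every $(i,j)\in\mathcal{E}$, i.e. to \eqref{eq:edge_agreement_each_after}, and therefore to the edge agreements \eqref{eq:edge_agreement_require}.

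The one point I would handle with care --- and the only real obstacle --- is that the edge orientations used to construct $H$ are arbitrary, so I must confirm that \eqref{eq:edge_agreement_each_after1} is independent of them. Reversing the orientation of edge $l$ turns the $l$-th block of $\bar{H}\boldsymbol{x}$ into $\boldsymbol{x}_{j_l}-\boldsymbol{x}_{i_l}$ and relabels the edge as $(j_l,i_l)$; by Assumption \ref{assum:consistency} together with \eqref{eq:mat_define} one has $P_{j_l,i_l}=P_{i_l,j_l}$ and $\bar{\boldsymbol{b}}_{j_l,i_l}=-\bar{\boldsymbol{b}}_{i_l,j_l}$, so the block constraint $P_{j_l,i_l}(\boldsymbol{x}_{j_l}-\boldsymbol{x}_{i_l}-\bar{\boldsymbol{b}}_{j_l,i_l})=\boldsymbol{0}$ coincides with $P_{i_l,j_l}(\boldsymbol{x}_{i_l}-\boldsymbol{x}_{j_l}-\bar{\boldsymbol{b}}_{i_l,j_l})=\boldsymbol{0}$. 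Thus the compact condition \eqref{eq:edge_agreement_each_after1} is well posed and the claimed equivalence holds for every admissible orientation.
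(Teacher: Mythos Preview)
Your proposal is correct and follows exactly the route the paper has in mind: the paper does not give a separate proof of Lemma~\ref{lemma:edge_agreement} but simply states it as an immediate consequence of the definitions of $\bar{H}$ and $\bar{P}$ together with the already-established equivalence \eqref{eq:edge_agreement_each_after}, which is precisely the block-by-block argument you spell out. Your additional check on orientation independence is a nice clarification of the paper's remark that the edge directions may be chosen arbitrarily as long as they are consistent with Assumption~\ref{assum:consistency}.
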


Before proceeding, we also make the following assumption about $\bar{H}$ and $\bar{P}$:
\begin{assumption} \label{assump:well_configured}
The graph $\mathbb{G}$ is connected and well-configured for edge agreements, i.e.
\begin{equation} \label{eq:well_configured}
    \ \emph{ker } \bar{H}^{\prime} \cap \emph{image } \bar{P} = \boldsymbol{0}.
\end{equation}
\end{assumption}

By the definition of kernel and image, $\text{ker } \bar{H}^{\prime} \triangleq \{ \bar{\boldsymbol{x}} \in \mathbb{R}^{\bar{m}n} \ | \ \bar{H}^{\prime} \bar{\boldsymbol{x}} = \boldsymbol{0} \}$ and $\text{image } \bar{P} \triangleq \{ \boldsymbol{y} \in \mathbb{R}^{\bar{m}n} \ | \  \boldsymbol{y} = \bar{P} \bar{\boldsymbol{x}}, \ \bar{\boldsymbol{x}} \in \mathbb{R}^{\bar{m}n} \}$. Since $\bar{P}$ is a diagonal matrix of projections, $\text{image } \bar{P} = \mathbb{R}^{\bar{m}n}$. Then $\emph{ker } \bar{H}^{\prime} \cap \emph{image } \bar{P} = \boldsymbol{0}$ indicates that
$\bar{H}^{\prime} \bar{P} \bar{\boldsymbol{x}} = \boldsymbol{0} \Rightarrow \bar{P} \bar{\boldsymbol{x}} = \boldsymbol{0}$, which further implies that 
$\bar{H}^{\prime}\bar{P}(\bar{H} \boldsymbol{x} - \bar{\boldsymbol{b}}) = \boldsymbol{0} \Rightarrow \bar{P}(\bar{H} \boldsymbol{x} - \bar{\boldsymbol{b}}) = \boldsymbol{0}$.

\section{Algorithm and Main Results} \label{section:method}

In this section, a distributed augmented Lagrangian multiplier algorithm in continuous time is proposed to solve the multi-agent optimization problem subject to edge-agreement constraints as in  \eqref{problem_interest}. 

\subsection{Proposed Distributed Algorithm}
Since the edge agreements \eqref{eq:edge_agreement_require} are equivalent to \eqref{eq:edge_agreement_each_after1} as indicated in Lemma \ref{lemma:edge_agreement}, solving the problem \eqref{problem_interest} is then equivalent to
\begin{equation} \label{eq:problem_equivalent}
\begin{aligned}
{ \underset {\boldsymbol{x} \in \mathbb{R}^{mn}} {\min} }  \quad & \boldsymbol{f}(\boldsymbol{x}) 
\quad \textrm{s.t.} \quad &  \bar{P}(\bar{H} \boldsymbol{x}- \bar{\boldsymbol{b}}) = \boldsymbol{0},
\end{aligned}
\end{equation}
where $\boldsymbol{x} = \text{col}\{ \boldsymbol{x}_1, \cdots, \boldsymbol{x}_m \} \in \mathbb{R}^{mn}$, and $\boldsymbol{f}:\mathbb{R}^{mn} \mapsto \mathbb{R}$ is $ \boldsymbol{f}(\boldsymbol{x}) 
 \triangleq\sum_{i=1}^{m} \boldsymbol{f}_i(\boldsymbol{x}_i) $.

To solve the constrained optimization in \eqref{eq:problem_equivalent}, we introduce the Lagrangian multiplier $\boldsymbol{\lambda} = \text{col} \{ \boldsymbol{\lambda}_1, \cdots, \boldsymbol{\lambda}_{m} \} \in \mathbb{R}^{mn}$ with $\boldsymbol{\lambda}_i \in \mathbb{R}^{n}$. Let
\begin{equation} \label{eq:lagrange_define}
\boldsymbol{F}(\boldsymbol{x}, \boldsymbol{\lambda}) = \boldsymbol{f}(\boldsymbol{x}) +  \boldsymbol{\lambda}^{\prime} \bar{H}^{\prime}\bar{P}(\bar{H} \boldsymbol{x} - \bar{\boldsymbol{b}}) + \frac{1}{2}||\bar{P}(\bar{H} \boldsymbol{x} - \bar{\boldsymbol{b}})||^2,
\end{equation}
Note that $\boldsymbol{F}:\mathbb{R}^{mn} \times \mathbb{R}^{mn} \mapsto \mathbb{R}$ is continuously differentiable since $\boldsymbol{f}$ is continuously differentiable. Since $\boldsymbol{F}$ is convex in its first argument and linear in the second argument, one naturally employs the following saddle-point dynamics, i.e., gradient descent in $\boldsymbol{x}$ and gradient ascent in $\boldsymbol{\lambda}$:
\begin{equation} \label{eq:algorithm_primal_dual_compact}
\begin{split}
\dot{\boldsymbol{x}}(t) =& -\nabla_{\boldsymbol{x}} \boldsymbol{F}(\boldsymbol{x}, \boldsymbol{\lambda}) = - \nabla \boldsymbol{f}(\boldsymbol{x}(t)) \\
&\  - \bar{H}^{\prime}\bar{P}\bar{H} \boldsymbol{\lambda}(t) - \bar{H}^{\prime}\bar{P}(\bar{H} \boldsymbol{x}(t) - \bar{\boldsymbol{b}}), \\
\dot{\boldsymbol{\lambda}}(t) =& \nabla_{\boldsymbol{\lambda}} \boldsymbol{F}(\boldsymbol{x}, \boldsymbol{\lambda}) = \bar{H}^{\prime}\bar{P}(\bar{H} \boldsymbol{x}(t) - \bar{\boldsymbol{b}}).
\end{split}
\end{equation} This leads to the following update at each agent $i$:
\begin{equation} \label{eq:algorithm_primal_dual_single}
\begin{split}
\dot{\boldsymbol{x}}_i(t) =& - \nabla \boldsymbol{f}_i(\boldsymbol{x}_i(t)) - \sum_{j \in \mathcal{N}_i} P_{ij}(\boldsymbol{\lambda}_i(t) - \boldsymbol{\lambda}_j(t)) \\
&- \sum_{j \in \mathcal{N}_i} P_{ij}(\boldsymbol{x}_i(t) - \boldsymbol{x}_j(t) - \bar{\boldsymbol{b}}_{ij}), \\
\dot{\boldsymbol{\lambda}}_i(t) =& \sum_{j \in \mathcal{N}_i} P_{ij}(\boldsymbol{x}_i(t) - \boldsymbol{x}_j(t) - \bar{\boldsymbol{b}}_{ij}),
\end{split}
\end{equation}
where $P_{ij}$ and $\bar{\boldsymbol{b}}_{ij}$ are as defined in \eqref{eq:mat_define}.
Note right away that the update \eqref{eq:algorithm_primal_dual_single} is distributed since it only requires local information available to each agent's neighbors. Moreover, one has the following theorem to prove global convergence.

\begin{theorem}[\textbf{Multi-Agent Optimization under Edge Agreement}]\label{theorem:saddle_point_theorem}
Let Assumption \ref{assum:existence} - \ref{assump:well_configured} hold. Suppose $\boldsymbol{f} = \sum_{i = 1}^m \boldsymbol{f}_i$ is strictly convex and each $\boldsymbol{f}_i$ is convex for all $i \in \mathcal{V}$. Then for any given $\boldsymbol{x}(0)$, the distributed update \eqref{eq:algorithm_primal_dual_single} drives each $\boldsymbol{x}_i(t)$ to converge asymptotically fast to a constant $\boldsymbol{x}_i^*$, $i\in \mathcal{V}$ that solve the problem  in \eqref{problem_interest}. 

\end{theorem}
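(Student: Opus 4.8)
The plan is the classical primal--dual argument: exhibit a saddle point of \eqref{eq:lagrange_define}, show the squared distance to it is a Lyapunov function for the dynamics \eqref{eq:algorithm_primal_dual_compact}, and then invoke LaSalle's invariance principle, strengthened to convergence to a single point.

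\emph{Existence of an equilibrium.} First I would produce a point $(\boldsymbol{x}^*,\boldsymbol{\lambda}^*)$ at which both right-hand sides of \eqref{eq:algorithm_primal_dual_compact} vanish and whose $\boldsymbol{x}$-component solves \eqref{problem_interest}. Take $\boldsymbol{x}^*$ from Assumption \ref{assum:existence}; by Lemma \ref{lemma:edge_agreement} it satisfies $\bar{P}(\bar{H}\boldsymbol{x}^*-\bar{\boldsymbol{b}})=\boldsymbol{0}$, so $\dot{\boldsymbol{\lambda}}=\boldsymbol{0}$ there for any $\boldsymbol{\lambda}$. Since \eqref{eq:problem_equivalent} is convex with affine equality constraints and is feasible, a KKT multiplier exists, giving $-\nabla\boldsymbol{f}(\boldsymbol{x}^*)\in\text{image}\,\bar{H}^{\prime}\bar{P}$; because $\bar{H}^{\prime}\bar{P}\bar{H}=(\bar{P}\bar{H})^{\prime}(\bar{P}\bar{H})$ has the same image as $(\bar{P}\bar{H})^{\prime}=\bar{H}^{\prime}\bar{P}$, there is $\boldsymbol{\lambda}^*$ with $\bar{H}^{\prime}\bar{P}\bar{H}\boldsymbol{\lambda}^*=-\nabla\boldsymbol{f}(\boldsymbol{x}^*)$, which makes $\dot{\boldsymbol{x}}=\boldsymbol{0}$ at $(\boldsymbol{x}^*,\boldsymbol{\lambda}^*)$ as well.

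\emph{Lyapunov analysis.} Take $V=\tfrac{1}{2}\|\boldsymbol{x}-\boldsymbol{x}^*\|^2+\tfrac{1}{2}\|\boldsymbol{\lambda}-\boldsymbol{\lambda}^*\|^2$, which is radially unbounded. Differentiating along \eqref{eq:algorithm_primal_dual_compact}, inserting the equilibrium identities $\nabla\boldsymbol{f}(\boldsymbol{x}^*)=-\bar{H}^{\prime}\bar{P}\bar{H}\boldsymbol{\lambda}^*$ and $\bar{P}(\bar{H}\boldsymbol{x}^*-\bar{\boldsymbol{b}})=\boldsymbol{0}$, and using $\bar{P}^2=\bar{P}=\bar{P}^{\prime}$, the $\boldsymbol{x}$--$\boldsymbol{\lambda}$ cross terms cancel and there remains
\begin{equation*}
\dot{V}=-(\boldsymbol{x}-\boldsymbol{x}^*)^{\prime}\!\big(\nabla\boldsymbol{f}(\boldsymbol{x})-\nabla\boldsymbol{f}(\boldsymbol{x}^*)\big)-\big\|\bar{P}\bar{H}(\boldsymbol{x}-\boldsymbol{x}^*)\big\|^2\le 0,
\end{equation*}
the first term being nonpositive by convexity of $\boldsymbol{f}$. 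Hence every trajectory stays in a compact sublevel set of $V$ and exists for all $t\ge 0$.

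\emph{Invariance principle and convergence to a point.} By LaSalle, the trajectory converges to the largest invariant set $M\subseteq\{\dot{V}=0\}$. On $\{\dot{V}=0\}$, strict convexity of $\boldsymbol{f}$ rules out $(\boldsymbol{x}-\boldsymbol{x}^*)^{\prime}(\nabla\boldsymbol{f}(\boldsymbol{x})-\nabla\boldsymbol{f}(\boldsymbol{x}^*))=0$ unless $\boldsymbol{x}=\boldsymbol{x}^*$; so $\boldsymbol{x}\equiv\boldsymbol{x}^*$ on $M$, whence $\dot{\boldsymbol{x}}=\boldsymbol{0}$ and $\dot{\boldsymbol{\lambda}}=\bar{H}^{\prime}\bar{P}(\bar{H}\boldsymbol{x}^*-\bar{\boldsymbol{b}})=\boldsymbol{0}$ on $M$, i.e.\ $M$ consists of equilibria of \eqref{eq:algorithm_primal_dual_compact} with $\boldsymbol{x}$-component $\boldsymbol{x}^*$ (Assumption \ref{assump:well_configured} moreover makes every such equilibrium feasible for \eqref{eq:edge_agreement_require}). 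To upgrade convergence to $M$ into convergence to one point $(\boldsymbol{x}^*,\hat{\boldsymbol{\lambda}})$ — the delicate step, since $M$ need not be a singleton in $\boldsymbol{\lambda}$ — I would use the standard observation that any equilibrium may replace $(\boldsymbol{x}^*,\boldsymbol{\lambda}^*)$ in the definition of $V$: for any $\omega$-limit point $(\boldsymbol{x}^*,\hat{\boldsymbol{\lambda}})$ of the trajectory, $\tfrac{1}{2}\|\boldsymbol{x}(t)-\boldsymbol{x}^*\|^2+\tfrac{1}{2}\|\boldsymbol{\lambda}(t)-\hat{\boldsymbol{\lambda}}\|^2$ is nonincreasing and has a subsequence tending to $0$, hence tends to $0$, so $(\boldsymbol{x}(t),\boldsymbol{\lambda}(t))\to(\boldsymbol{x}^*,\hat{\boldsymbol{\lambda}})$. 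Writing $\boldsymbol{x}^*=\text{col}\{\boldsymbol{x}_1^*,\cdots,\boldsymbol{x}_m^*\}$, this gives $\boldsymbol{x}_i(t)\to\boldsymbol{x}_i^*$ for each $i$, with $\boldsymbol{x}^*$ a solution of \eqref{problem_interest} by construction. The main obstacle is exactly this pinning step; the existence of the saddle point, the $\dot{V}$ computation, and the invariant-set characterization are routine.
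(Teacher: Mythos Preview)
Your argument is correct. The Lyapunov computation is right (the cross terms do cancel thanks to $\bar{P}^2=\bar{P}$ and symmetry of $\bar{H}'\bar{P}\bar{H}$), the LaSalle step correctly exploits strict convexity of $\boldsymbol{f}$ to pin $\boldsymbol{x}\equiv\boldsymbol{x}^*$ on the invariant set, and the Fej\'er-type $\omega$-limit argument is the standard and valid way to upgrade set convergence to point convergence.

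The paper's proof reaches the same conclusion by a more modular route: it first shows (Lemma~\ref{lemma:saddle_point}) that every equilibrium of \eqref{eq:algorithm_primal_dual_compact} is a \emph{global} saddle point of $\boldsymbol{F}$, then invokes two off-the-shelf results --- Rockafellar--Wets for ``saddle point $\Rightarrow$ primal optimum'' and Cherukuri--Mallada--Cort\'es for ``strictly convex--concave $\Rightarrow$ global asymptotic stability of $\text{Saddle}(\boldsymbol{F})$ with convergence to a point.'' Your proof is essentially what lies inside that last cited result, unpacked: the quadratic distance to any saddle point is a Lyapunov function, LaSalle plus strict convexity identifies the invariant set, and the nonincreasing-distance-to-every-equilibrium trick pins the limit. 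So the two proofs share the same skeleton; the paper outsources the dynamical-systems half to the literature and in exchange spends its effort on the saddle-point characterization, while you carry out the convergence analysis explicitly and absorb the saddle-point/KKT existence into a short remark. Your version is more self-contained; the paper's is shorter on the page and makes the convex--concave structure of $\boldsymbol{F}$ the visible hinge.
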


\begin{remark}
If the objective function is a constant, the proposed distributed algorithm \eqref{eq:algorithm_primal_dual_single} is reduced to the same one without the term $-\nabla \boldsymbol{f}_i(\boldsymbol{x}_i(t))$.
\end{remark}

To prove Theorem \ref{theorem:saddle_point_theorem}, we first introduce the following definitions.
\begin{definition} [\textbf{Saddle Point}] \label{definition:saddle_point}
A point $(\boldsymbol{x}^*, \boldsymbol{\lambda}^*) \in \mathbb{R}^n \times \mathbb{R}^m$ is a local min-max saddle point of a continuously differentiable function $\boldsymbol{F}:\mathbb{R}^n \times \mathbb{R}^m \mapsto \mathbb{R}$ if there exist open neighborhoods $\mathcal{U}_{\boldsymbol{x}^*} \subset \mathbb{R}^n$ of $\boldsymbol{x}^*$ and $\mathcal{U}_{\boldsymbol{\lambda}^*} \subset \mathbb{R}^m$ of $\boldsymbol{\lambda}^*$ such that
\begin{equation}
\boldsymbol{F}(\boldsymbol{x}^*, \boldsymbol{\lambda}) \leq \boldsymbol{F}(\boldsymbol{x}^*, \boldsymbol{\lambda}^*) \leq \boldsymbol{F}(\boldsymbol{x}, \boldsymbol{\lambda}^*)
\end{equation}
for all $\boldsymbol{\lambda}^* \in \mathcal{U}_{\boldsymbol{\lambda}^*}$ and $\boldsymbol{x}^* \in \mathcal{U}_{\boldsymbol{x}^*}$. The point $(\boldsymbol{x}^*, \boldsymbol{\lambda}^*)$ is a global min-max saddle point of $\boldsymbol{F}$ if $\mathcal{U}_{\boldsymbol{x}^*} = \mathbb{R}^n$ and $\mathcal{U}_{\boldsymbol{\lambda}^*} = \mathbb{R}^m$.
\end{definition}

\begin{definition} [\textbf{Convex-concavity}] \label{definition:convex-concave}
A function $\boldsymbol{F}:\mathbb{R}^n \times \mathbb{R}^m \mapsto \mathbb{R}$ is globally convex-concave in $\mathbb{R}^n \times \mathbb{R}^m$ if for all $(\bar{\boldsymbol{x}}, \bar{\boldsymbol{\lambda}}) \in \mathbb{R}^n \times \mathbb{R}^m$, the functions $\boldsymbol{x} \mapsto \boldsymbol{F}(\boldsymbol{x}, \bar{\boldsymbol{\lambda}})$ and $\boldsymbol{\lambda} \mapsto \boldsymbol{F}(\bar{\boldsymbol{x}}, \boldsymbol{\lambda})$ are convex over $\mathbb{R}^n$ and concave over $\mathbb{R}^m$, respectively.
$\boldsymbol{F}$ is globally strictly convex-concave if it is globally convex-concave and for any $(\bar{\boldsymbol{x}}, \bar{\boldsymbol{\lambda}}) \in \mathbb{R}^n \times \mathbb{R}^m$, \textbf{\emph{either}} $\boldsymbol{x} \mapsto \boldsymbol{F}(\boldsymbol{x}, \bar{\boldsymbol{\lambda}})$ is strictly convex, \textbf{\emph{or}} $\boldsymbol{\lambda} \mapsto \boldsymbol{F}(\bar{\boldsymbol{x}}, \boldsymbol{\lambda})$ is strictly concave.
\end{definition}

Next, let $\text{Saddle}(\boldsymbol{F})$ denote the set of saddle points of $\boldsymbol{F}$, and one has the following lemma.

\begin{lemma} \label{lemma:saddle_point}
Every equilibrium of the dynamics \eqref{eq:algorithm_primal_dual_compact} is a global saddle point of the augmented Lagrangian $\boldsymbol{F}$. 
\end{lemma}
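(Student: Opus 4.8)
The plan is to unpack the equilibrium conditions of \eqref{eq:algorithm_primal_dual_compact} and then verify the two inequalities of Definition \ref{definition:saddle_point} directly at such a point. Let $(\bar{\boldsymbol{x}}, \bar{\boldsymbol{\lambda}})$ be any equilibrium, so that $\nabla_{\boldsymbol{x}} \boldsymbol{F}(\bar{\boldsymbol{x}}, \bar{\boldsymbol{\lambda}}) = \boldsymbol{0}$ and $\bar{H}^{\prime}\bar{P}(\bar{H}\bar{\boldsymbol{x}} - \bar{\boldsymbol{b}}) = \boldsymbol{0}$. The first and key step is to upgrade the second condition using Assumption \ref{assump:well_configured}: since $\text{ker } \bar{H}^{\prime} \cap \text{image } \bar{P} = \boldsymbol{0}$ means precisely that $\bar{H}^{\prime}\bar{P}\bar{\boldsymbol{z}} = \boldsymbol{0}$ implies $\bar{P}\bar{\boldsymbol{z}} = \boldsymbol{0}$, choosing $\bar{\boldsymbol{z}} = \bar{H}\bar{\boldsymbol{x}} - \bar{\boldsymbol{b}}$ gives $\bar{P}(\bar{H}\bar{\boldsymbol{x}} - \bar{\boldsymbol{b}}) = \boldsymbol{0}$, i.e. $\bar{\boldsymbol{x}}$ is feasible for \eqref{eq:problem_equivalent}. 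This passage from gradient-in-$\boldsymbol{\lambda}$ stationarity to exact primal feasibility is the one place where the graph hypothesis is essential, and I expect it to be the main (though short) obstacle; without it an equilibrium need not satisfy the edge agreements and the left saddle-point inequality could fail.

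For the left inequality, substitute $\bar{P}(\bar{H}\bar{\boldsymbol{x}} - \bar{\boldsymbol{b}}) = \boldsymbol{0}$ into \eqref{eq:lagrange_define}: both the bilinear term $\boldsymbol{\lambda}^{\prime}\bar{H}^{\prime}\bar{P}(\bar{H}\bar{\boldsymbol{x}} - \bar{\boldsymbol{b}})$ and the penalty $\tfrac{1}{2}\|\bar{P}(\bar{H}\bar{\boldsymbol{x}} - \bar{\boldsymbol{b}})\|^2$ vanish, so $\boldsymbol{F}(\bar{\boldsymbol{x}}, \boldsymbol{\lambda}) = \boldsymbol{f}(\bar{\boldsymbol{x}}) = \boldsymbol{F}(\bar{\boldsymbol{x}}, \bar{\boldsymbol{\lambda}})$ for \emph{every} $\boldsymbol{\lambda} \in \mathbb{R}^{mn}$; in particular $\boldsymbol{F}(\bar{\boldsymbol{x}}, \boldsymbol{\lambda}) \le \boldsymbol{F}(\bar{\boldsymbol{x}}, \bar{\boldsymbol{\lambda}})$ holds, with equality. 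For the right inequality, observe that $\boldsymbol{x} \mapsto \boldsymbol{F}(\boldsymbol{x}, \bar{\boldsymbol{\lambda}})$ is convex: $\boldsymbol{f}$ is convex by hypothesis, $\boldsymbol{\lambda}^{\prime}\bar{H}^{\prime}\bar{P}(\bar{H}\boldsymbol{x} - \bar{\boldsymbol{b}})$ is affine in $\boldsymbol{x}$, and $\tfrac{1}{2}\|\bar{P}(\bar{H}\boldsymbol{x} - \bar{\boldsymbol{b}})\|^2 = \tfrac{1}{2}(\bar{H}\boldsymbol{x} - \bar{\boldsymbol{b}})^{\prime}\bar{P}(\bar{H}\boldsymbol{x} - \bar{\boldsymbol{b}})$ is a convex quadratic since $\bar{P} = \bar{P}^{\prime} = \bar{P}^2 \succeq 0$. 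Because a differentiable convex function is globally minimized at any stationary point, and $\nabla_{\boldsymbol{x}}\boldsymbol{F}(\bar{\boldsymbol{x}}, \bar{\boldsymbol{\lambda}}) = \boldsymbol{0}$ by the first equilibrium condition, it follows that $\boldsymbol{F}(\bar{\boldsymbol{x}}, \bar{\boldsymbol{\lambda}}) \le \boldsymbol{F}(\boldsymbol{x}, \bar{\boldsymbol{\lambda}})$ for all $\boldsymbol{x} \in \mathbb{R}^{mn}$.

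Chaining the two bounds shows $(\bar{\boldsymbol{x}}, \bar{\boldsymbol{\lambda}})$ satisfies Definition \ref{definition:saddle_point} with $\mathcal{U}_{\boldsymbol{x}^*} = \mathbb{R}^{mn}$ and $\mathcal{U}_{\boldsymbol{\lambda}^*} = \mathbb{R}^{mn}$, i.e. it is a global saddle point of $\boldsymbol{F}$, which proves the lemma. I would close with the remark that, at such an equilibrium, the first condition also reduces to $\nabla \boldsymbol{f}(\bar{\boldsymbol{x}}) = -\bar{H}^{\prime}\bar{P}\bar{H}\bar{\boldsymbol{\lambda}}$; together with feasibility $\bar{P}(\bar{H}\bar{\boldsymbol{x}} - \bar{\boldsymbol{b}}) = \boldsymbol{0}$ this is exactly the KKT system of \eqref{eq:problem_equivalent}, so $\bar{\boldsymbol{x}}$ solves \eqref{problem_interest} — a fact that will be reused in the proof of Theorem \ref{theorem:saddle_point_theorem}.
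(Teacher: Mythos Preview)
Your proof is correct and follows essentially the same approach as the paper: verify the two inequalities of Definition~\ref{definition:saddle_point} directly, using Assumption~\ref{assump:well_configured} to upgrade $\bar{H}^{\prime}\bar{P}(\bar{H}\bar{\boldsymbol{x}}-\bar{\boldsymbol{b}})=\boldsymbol{0}$ to $\bar{P}(\bar{H}\bar{\boldsymbol{x}}-\bar{\boldsymbol{b}})=\boldsymbol{0}$ and then invoking convexity. The only notable difference is stylistic: for the right inequality the paper expands $\boldsymbol{F}(\boldsymbol{x},\bar{\boldsymbol{\lambda}})-\boldsymbol{F}(\bar{\boldsymbol{x}},\bar{\boldsymbol{\lambda}})$, substitutes $\bar{H}^{\prime}\bar{P}\bar{H}\bar{\boldsymbol{\lambda}}=-\nabla\boldsymbol{f}(\bar{\boldsymbol{x}})$, and finishes with the first-order convexity inequality for $\boldsymbol{f}$, whereas you argue more directly that $\boldsymbol{x}\mapsto\boldsymbol{F}(\boldsymbol{x},\bar{\boldsymbol{\lambda}})$ is convex and stationary at $\bar{\boldsymbol{x}}$, hence globally minimized there --- a slightly cleaner packaging of the same idea.
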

\begin{proof} 
By Assumption \ref{assum:existence}, $\text{Saddle}(\boldsymbol{F}) \neq \emptyset$.
The set of the equilibrium of the dynamics \eqref{eq:algorithm_primal_dual_compact} or \eqref{eq:algorithm_primal_dual_single} satisfies
\begin{equation} \label{eq:saddle_point_derive}
\begin{split}
\dot{\boldsymbol{x}}(t) &= - \nabla \boldsymbol{f}(\boldsymbol{x}(t)) - \bar{H}^{\prime}\bar{P}\bar{H} \boldsymbol{\lambda}(t) - \bar{H}^{\prime}\bar{P}(\bar{H} \boldsymbol{x}(t) - \bar{\boldsymbol{b}}) = \boldsymbol{0}, \\
\dot{\boldsymbol{\lambda}}(t) &= \bar{H}^{\prime}\bar{P}(\bar{H} \boldsymbol{x}(t) - \bar{\boldsymbol{b}}) = \boldsymbol{0}.
\end{split}
\end{equation}
Denote the set of the dynamics' equilibrium as $\text{Equilibrium}(\boldsymbol{F})$. Then \eqref{eq:saddle_point_derive} yields
\begin{equation} \label{eq:eq_F_define}
\begin{split}
&\text{Equilibrium}(\boldsymbol{F}) \equiv \Big\{ (\bar{\boldsymbol{x}}, \bar{\boldsymbol{\lambda}}) \ \Big| \ \nabla \boldsymbol{f}(\bar{\boldsymbol{x}}) + \bar{H}^{\prime}\bar{P}\bar{H} \bar{\boldsymbol{\lambda}} = \boldsymbol{0} \\
&\text{ and }\bar{H}^{\prime}\bar{P}(\bar{H} \bar{\boldsymbol{x}} - \bar{\boldsymbol{b}}) = \boldsymbol{0} \Big\}.
\end{split}
\end{equation}
Following Definition \ref{definition:saddle_point} and the convexity of $\boldsymbol{f}_i$, one can prove that every equilibrium of saddle-point dynamics \eqref{eq:algorithm_primal_dual_compact} is a global saddle point. To see this, for every $(\bar{\boldsymbol{x}}, \bar{\boldsymbol{\lambda}}) \in \text{Equilibrium}(\boldsymbol{F})$,
\begin{equation}
\begin{split}
&\boldsymbol{F}(\bar{\boldsymbol{x}}, \boldsymbol{\lambda}) - \boldsymbol{F}(\bar{\boldsymbol{x}}, \bar{\boldsymbol{\lambda}}) = \\
&\boldsymbol{f}(\bar{\boldsymbol{x}}) + {\boldsymbol{\lambda}}^{\prime} \bar{H}^{\prime}\bar{P}(\bar{H} \bar{\boldsymbol{x}} - \bar{\boldsymbol{b}}) + \frac{1}{2}||\bar{P}(\bar{H} \bar{\boldsymbol{x}} - \bar{\boldsymbol{b}})||^2 \\
& -\boldsymbol{f}(\bar{\boldsymbol{x}}) - {\bar{\boldsymbol{\lambda}}}^{\prime} \bar{H}^{\prime}\bar{P}(\bar{H} \bar{\boldsymbol{x}} - \bar{\boldsymbol{b}}) - \frac{1}{2}||\bar{P}(\bar{H} \bar{\boldsymbol{x}} - \bar{\boldsymbol{b}})||^2 \\
&= {(\boldsymbol{\lambda} - \bar{\boldsymbol{\lambda}})}^{\prime} \bar{H}^{\prime}\bar{P}(\bar{H} \bar{\boldsymbol{x}} - \bar{\boldsymbol{b}}) \overset{\mathrm{(a)}}{=} 0.
\end{split}
\end{equation}
Note that the equality (a) holds because for every equilibrium, $\bar{H}^{\prime}\bar{P}(\bar{H} \bar{\boldsymbol{x}} - \bar{\boldsymbol{b}}) = \boldsymbol{0}$. Also, for every $(\bar{\boldsymbol{x}}, \bar{\boldsymbol{\lambda}}) \in \text{Equilibrium}(\boldsymbol{F})$,
\begin{equation}
\begin{split}
&\boldsymbol{F}(\boldsymbol{x}, \bar{\boldsymbol{\lambda}}) - \boldsymbol{F}(\bar{\boldsymbol{x}}, \bar{\boldsymbol{\lambda}}) = \\
&\boldsymbol{f}(\boldsymbol{x}) + {\bar{\boldsymbol{\lambda}}}^{\prime} \bar{H}^{\prime}\bar{P}(\bar{H} \boldsymbol{x} - \bar{\boldsymbol{b}}) + \frac{1}{2}||\bar{P}(\bar{H} \boldsymbol{x} - \bar{\boldsymbol{b}})||^2 \\
& -\boldsymbol{f}(\bar{\boldsymbol{x}}) - {\bar{\boldsymbol{\lambda}}}^{\prime} \bar{H}^{\prime}\bar{P}(\bar{H} \bar{\boldsymbol{x}} - \bar{\boldsymbol{b}}) - \frac{1}{2}||\bar{P}(\bar{H} \bar{\boldsymbol{x}} - \bar{\boldsymbol{b}})||^2. \\
\end{split}
\end{equation}
By Assumption \ref{assump:well_configured}, $\bar{H}^{\prime}\bar{P}(\bar{H} \bar{\boldsymbol{x}} - \bar{\boldsymbol{b}}) = \boldsymbol{0} \Rightarrow \bar{P}(\bar{H} \bar{\boldsymbol{x}} - \bar{\boldsymbol{b}}) = \boldsymbol{0}$. Thus,
\begin{equation}
\begin{split}
&\boldsymbol{F}(\boldsymbol{x}, \bar{\boldsymbol{\lambda}}) - \boldsymbol{F}(\bar{\boldsymbol{x}}, \bar{\boldsymbol{\lambda}}) = \\
&\boldsymbol{f}(\boldsymbol{x}) - \boldsymbol{f}(\bar{\boldsymbol{x}}) + {\bar{\boldsymbol{\lambda}}}^{\prime} \bar{H}^{\prime}\bar{P}\bar{H}(\boldsymbol{x} - \bar{ \boldsymbol{x}}) + \frac{1}{2}||\bar{P}(\bar{H} \boldsymbol{x} - \bar{\boldsymbol{b}})||^2 \\
&\geq \boldsymbol{f}(\boldsymbol{x}) - \boldsymbol{f}(\bar{\boldsymbol{x}}) + {\bar{\boldsymbol{\lambda}}}^{\prime} \bar{H}^{\prime}\bar{P}\bar{H}(\boldsymbol{x} - \bar{ \boldsymbol{x}}).
\end{split}
\end{equation}
By \eqref{eq:eq_F_define}, since for every equilibrium $\nabla \boldsymbol{f}(\bar{\boldsymbol{x}}) + \bar{H}^{\prime}\bar{P}\bar{H} \bar{\boldsymbol{\lambda}} = \boldsymbol{0}$ and $\bar{P}^{\prime}=\bar{P}$,
\begin{equation}
\begin{split}
\boldsymbol{F}(\boldsymbol{x}, \bar{\boldsymbol{\lambda}}) - \boldsymbol{F}(\bar{\boldsymbol{x}}, \bar{\boldsymbol{\lambda}}) &\geq \boldsymbol{f}(\boldsymbol{x}) - \boldsymbol{f}(\bar{\boldsymbol{x}}) + {\bar{\boldsymbol{\lambda}}}^{\prime} \bar{H}^{\prime}\bar{P}\bar{H}(\boldsymbol{x} - \bar{ \boldsymbol{x}}) \\
&= \boldsymbol{f}(\boldsymbol{x}) - \boldsymbol{f}(\bar{\boldsymbol{x}}) - {(\boldsymbol{x} - \bar{ \boldsymbol{x}})}^{\prime} \nabla \boldsymbol{f}(\bar{\boldsymbol{x}}) \geq 0.
\end{split}
\end{equation}
Note that the last inequality holds because for a convex function $\boldsymbol{f}$, $\boldsymbol{f}(\boldsymbol{y}) \geq \boldsymbol{f}(\boldsymbol{x}) + {(\boldsymbol{y} - \boldsymbol{x})}^{\prime} \nabla \boldsymbol{f}(\boldsymbol{x})$ for any $\boldsymbol{x}, \boldsymbol{y} \in \mathbb{R}^{mn}$.

Therefore, for every $(\bar{\boldsymbol{x}}, \bar{\boldsymbol{\lambda}}) \in \text{Equilibrium}(\boldsymbol{F})$, there holds
$\boldsymbol{F}(\bar{\boldsymbol{x}}, \boldsymbol{\lambda}) \leq \boldsymbol{F}(\bar{\boldsymbol{x}}, \bar{\boldsymbol{\lambda}}) \leq \boldsymbol{F}(\boldsymbol{x}, \bar{\boldsymbol{\lambda}})$. Thus by Definition \ref{definition:saddle_point}, every equilibrium of saddle-point dynamics \eqref{eq:algorithm_primal_dual_compact} is a global saddle point of the augmented Lagrangian $\boldsymbol{F}$, i.e. $\text{Equilibrium}(\boldsymbol{F}) \subseteq \text{Saddle}(\boldsymbol{F})$.
\end{proof}

Third, we need the following two lemmas to prove the convergence of saddle point dynamics, which are borrowed from \cite[Theorem 11.59]{rockafellar2009variational} and \cite[Corollary 4.2]{cherukuri2017saddle}, respectively.
\begin{lemma} \label{lemma:saddle_point_to_optimal_solution} \cite[Theorem 11.59]{rockafellar2009variational}
Suppose that $(\boldsymbol{x}^*, \boldsymbol{\lambda}^*)$ is a global saddle point of the augmented Lagrangian $\boldsymbol{F}: \mathbb{R}^{mn} \times \mathbb{R}^{mn} \mapsto \mathbb{R}$ in \eqref{eq:lagrange_define}. Then $\boldsymbol{x}^*$ is a globally optimal solution to the primal problem \eqref{eq:problem_equivalent}.
\end{lemma}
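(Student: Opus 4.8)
The plan is to derive the claim directly from the definition of a global min-max saddle point, so that the proof is self-contained and the cited reference \cite[Theorem 11.59]{rockafellar2009variational} is only used as a cross-check. Write $h(\boldsymbol{x}) \triangleq \bar{P}(\bar{H}\boldsymbol{x} - \bar{\boldsymbol{b}})$, so that the augmented Lagrangian \eqref{eq:lagrange_define} reads $\boldsymbol{F}(\boldsymbol{x},\boldsymbol{\lambda}) = \boldsymbol{f}(\boldsymbol{x}) + \boldsymbol{\lambda}^{\prime}\bar{H}^{\prime}h(\boldsymbol{x}) + \tfrac{1}{2}\|h(\boldsymbol{x})\|^2$ and the primal problem \eqref{eq:problem_equivalent} is to minimize $\boldsymbol{f}$ over the feasible set $\{\boldsymbol{x} : h(\boldsymbol{x}) = \boldsymbol{0}\}$. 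The argument splits into a feasibility step and an optimality step, each using one half of the saddle inequality $\boldsymbol{F}(\boldsymbol{x}^*,\boldsymbol{\lambda}) \le \boldsymbol{F}(\boldsymbol{x}^*,\boldsymbol{\lambda}^*) \le \boldsymbol{F}(\boldsymbol{x},\boldsymbol{\lambda}^*)$.

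For feasibility, I would use the left inequality $\boldsymbol{F}(\boldsymbol{x}^*,\boldsymbol{\lambda}) \le \boldsymbol{F}(\boldsymbol{x}^*,\boldsymbol{\lambda}^*)$, valid for every $\boldsymbol{\lambda} \in \mathbb{R}^{mn}$. Cancelling the $\boldsymbol{\lambda}$-independent terms leaves $(\boldsymbol{\lambda} - \boldsymbol{\lambda}^*)^{\prime}\bar{H}^{\prime}h(\boldsymbol{x}^*) \le 0$ for all $\boldsymbol{\lambda}$; the choice $\boldsymbol{\lambda} = \boldsymbol{\lambda}^* + \bar{H}^{\prime}h(\boldsymbol{x}^*)$ then forces $\|\bar{H}^{\prime}h(\boldsymbol{x}^*)\|^2 \le 0$, i.e. $\bar{H}^{\prime}\bar{P}(\bar{H}\boldsymbol{x}^* - \bar{\boldsymbol{b}}) = \boldsymbol{0}$. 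This is the single place where Assumption \ref{assump:well_configured} is needed: well-configuredness upgrades $\bar{H}^{\prime}\bar{P}(\bar{H}\boldsymbol{x}^* - \bar{\boldsymbol{b}}) = \boldsymbol{0}$ to $\bar{P}(\bar{H}\boldsymbol{x}^* - \bar{\boldsymbol{b}}) = \boldsymbol{0}$, that is $h(\boldsymbol{x}^*) = \boldsymbol{0}$, so $\boldsymbol{x}^*$ is feasible for \eqref{eq:problem_equivalent}. As a byproduct, $\boldsymbol{F}(\boldsymbol{x}^*,\boldsymbol{\lambda}^*) = \boldsymbol{f}(\boldsymbol{x}^*)$.

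For optimality, I would use the right inequality $\boldsymbol{F}(\boldsymbol{x}^*,\boldsymbol{\lambda}^*) \le \boldsymbol{F}(\boldsymbol{x},\boldsymbol{\lambda}^*)$ for all $\boldsymbol{x}$, and restrict it to feasible $\boldsymbol{x}$, i.e. those with $h(\boldsymbol{x}) = \boldsymbol{0}$. For such $\boldsymbol{x}$ both the linear and the quadratic terms of $\boldsymbol{F}(\boldsymbol{x},\boldsymbol{\lambda}^*)$ vanish, so the inequality collapses to $\boldsymbol{f}(\boldsymbol{x}^*) \le \boldsymbol{f}(\boldsymbol{x})$. Combining the two steps, $\boldsymbol{x}^*$ is feasible and no feasible point attains a smaller value of $\boldsymbol{f}$, which is precisely the assertion that $\boldsymbol{x}^*$ is a globally optimal solution of \eqref{eq:problem_equivalent}.

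I do not expect a genuine obstacle here: this is the textbook "saddle point implies primal optimality" fact, and the only nonstandard feature is that the dual variable enters through $\bar{H}^{\prime}\bar{P}$ rather than as a free multiplier on the constraint $h(\boldsymbol{x}) = \boldsymbol{0}$ — which is exactly why Assumption \ref{assump:well_configured} is what turns stationarity of the inner minimization over $\boldsymbol{\lambda}$ into feasibility of $\boldsymbol{x}^*$. The mildly delicate point, then, is simply to invoke that assumption at the right moment rather than a calculation. Alternatively, after the change of multiplier $\boldsymbol{\mu} = \bar{P}\bar{H}\boldsymbol{\lambda}$ recasts $\boldsymbol{F}$ as the standard augmented Lagrangian of the affine-equality problem \eqref{eq:problem_equivalent} with penalty parameter one, the statement may be read off from \cite[Theorem 11.59]{rockafellar2009variational}.
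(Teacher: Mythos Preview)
Your proof is correct. The paper itself does not prove this lemma at all: it simply borrows the statement from \cite[Theorem 11.59]{rockafellar2009variational} and moves on. Your argument is therefore strictly more informative than what appears in the paper, and in fact it surfaces a point the bare citation hides. The multiplier $\boldsymbol{\lambda}$ in \eqref{eq:lagrange_define} does not act directly on the constraint $\bar{P}(\bar{H}\boldsymbol{x}-\bar{\boldsymbol{b}})=\boldsymbol{0}$ but only through $\bar{H}^{\prime}$, so the left saddle inequality by itself yields only $\bar{H}^{\prime}\bar{P}(\bar{H}\boldsymbol{x}^*-\bar{\boldsymbol{b}})=\boldsymbol{0}$, not feasibility; you are right that Assumption~\ref{assump:well_configured} is what closes that gap. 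The paper implicitly relies on the same mechanism (it invokes Assumption~\ref{assump:well_configured} in the surrounding arguments, e.g.\ in the proof of Lemma~\ref{lemma:saddle_point}), but does not spell it out for this lemma. Your alternative route via the substitution $\boldsymbol{\mu}=\bar{P}\bar{H}\boldsymbol{\lambda}$ is also fine, with the same caveat that $\boldsymbol{\mu}$ then ranges only over $\text{image}(\bar{P}\bar{H})$, so the reduction to the textbook statement again leans on Assumption~\ref{assump:well_configured}.
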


\begin{lemma} \label{lemma:global_asymptotic_stability_convex_concave} \cite[Corollary 4.2]{cherukuri2017saddle}
For $\boldsymbol{F}:\mathbb{R}^n \times \mathbb{R}^m \mapsto \mathbb{R}$ continuously differentiable and globally strictly convex-concave, \emph{Saddle}($\boldsymbol{F}$) is globally asymptotically stable under the saddle-point dynamics. The convergence of the trajectories is to a point.
\end{lemma}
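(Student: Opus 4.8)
The plan is to prove this by a Lyapunov argument anchored at a fixed saddle point, combined with the LaSalle invariance principle, and then to upgrade set-convergence to convergence to a single point. Write $\boldsymbol{z} = (\boldsymbol{x}, \boldsymbol{\lambda})$, let the saddle-point dynamics be $\dot{\boldsymbol{x}} = -\nabla_{\boldsymbol{x}}\boldsymbol{F}$, $\dot{\boldsymbol{\lambda}} = \nabla_{\boldsymbol{\lambda}}\boldsymbol{F}$, and fix any $\boldsymbol{z}^* = (\boldsymbol{x}^*,\boldsymbol{\lambda}^*) \in \text{Saddle}(\boldsymbol{F})$ (nonempty in our application by Assumption \ref{assum:existence}). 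First I would take the radially unbounded candidate $V_{\boldsymbol{z}^*}(\boldsymbol{z}) = \tfrac{1}{2}\|\boldsymbol{x}-\boldsymbol{x}^*\|^2 + \tfrac{1}{2}\|\boldsymbol{\lambda}-\boldsymbol{\lambda}^*\|^2$ and differentiate it along the flow, obtaining $\dot V_{\boldsymbol{z}^*} = -(\boldsymbol{x}-\boldsymbol{x}^*)^{\prime}\nabla_{\boldsymbol{x}}\boldsymbol{F} + (\boldsymbol{\lambda}-\boldsymbol{\lambda}^*)^{\prime}\nabla_{\boldsymbol{\lambda}}\boldsymbol{F}$.

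Next I would apply the first-order characterizations of convexity and concavity: convexity of $\boldsymbol{x}\mapsto\boldsymbol{F}(\boldsymbol{x},\boldsymbol{\lambda})$ gives $(\boldsymbol{x}-\boldsymbol{x}^*)^{\prime}\nabla_{\boldsymbol{x}}\boldsymbol{F}\geq \boldsymbol{F}(\boldsymbol{x},\boldsymbol{\lambda})-\boldsymbol{F}(\boldsymbol{x}^*,\boldsymbol{\lambda})$, and concavity of $\boldsymbol{\lambda}\mapsto\boldsymbol{F}(\boldsymbol{x},\boldsymbol{\lambda})$ gives $(\boldsymbol{\lambda}-\boldsymbol{\lambda}^*)^{\prime}\nabla_{\boldsymbol{\lambda}}\boldsymbol{F}\leq \boldsymbol{F}(\boldsymbol{x},\boldsymbol{\lambda})-\boldsymbol{F}(\boldsymbol{x},\boldsymbol{\lambda}^*)$. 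Adding yields $\dot V_{\boldsymbol{z}^*}\leq \boldsymbol{F}(\boldsymbol{x}^*,\boldsymbol{\lambda})-\boldsymbol{F}(\boldsymbol{x},\boldsymbol{\lambda}^*)$, and the saddle inequalities $\boldsymbol{F}(\boldsymbol{x}^*,\boldsymbol{\lambda})\leq \boldsymbol{F}(\boldsymbol{x}^*,\boldsymbol{\lambda}^*)\leq \boldsymbol{F}(\boldsymbol{x},\boldsymbol{\lambda}^*)$ make the right-hand side nonpositive. Hence $\dot V_{\boldsymbol{z}^*}\leq 0$, so $V_{\boldsymbol{z}^*}$ is nonincreasing; this gives Lyapunov stability of $\boldsymbol{z}^*$ and, since the sublevel sets of $V_{\boldsymbol{z}^*}$ are compact and forward invariant, boundedness (precompactness) of every trajectory. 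With precompact trajectories I would invoke LaSalle: each trajectory approaches the largest invariant set $M$ contained in $E = \{\boldsymbol{z}:\dot V_{\boldsymbol{z}^*}(\boldsymbol{z})=0\}$.

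The crux, and the step I expect to be the main obstacle, is showing $M\subseteq\text{Saddle}(\boldsymbol{F})$, which is exactly where strictness is indispensable. Since $\dot V_{\boldsymbol{z}^*}$ equals the negative of a sum of nonnegative terms (the two gradient-inequality gaps together with the two saddle-inequality gaps $\boldsymbol{F}(\boldsymbol{x}^*,\boldsymbol{\lambda}^*)-\boldsymbol{F}(\boldsymbol{x}^*,\boldsymbol{\lambda})$ and $\boldsymbol{F}(\boldsymbol{x},\boldsymbol{\lambda}^*)-\boldsymbol{F}(\boldsymbol{x}^*,\boldsymbol{\lambda}^*)$), vanishing of $\dot V_{\boldsymbol{z}^*}$ forces all four to vanish at every point of $E$. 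If $\boldsymbol{x}\mapsto\boldsymbol{F}(\boldsymbol{x},\boldsymbol{\lambda})$ is \emph{strictly} convex there, equality in its gradient inequality forces $\boldsymbol{x}=\boldsymbol{x}^*$; symmetrically, strict concavity forces $\boldsymbol{\lambda}=\boldsymbol{\lambda}^*$. By the either/or clause of Definition \ref{definition:convex-concave}, at least one of these holds at each point of $E$, so every point of $M$ lies in the slice $\{\boldsymbol{x}=\boldsymbol{x}^*\}\cup\{\boldsymbol{\lambda}=\boldsymbol{\lambda}^*\}$. The remaining difficulty is to rule out nontrivial invariant trajectories that oscillate between these two slices (precisely the behavior of a purely bilinear $\boldsymbol{F}$, which is convex-concave but not strictly so); I would close this using invariance of $M$: at a point $(\boldsymbol{x}^*,\bar{\boldsymbol{\lambda}})\in M$ the vanishing saddle gap gives $\nabla_{\boldsymbol{\lambda}}\boldsymbol{F}(\boldsymbol{x}^*,\bar{\boldsymbol{\lambda}})=\boldsymbol{0}$, so $\dot{\boldsymbol{\lambda}}=\boldsymbol{0}$ there; were $\nabla_{\boldsymbol{x}}\boldsymbol{F}(\boldsymbol{x}^*,\bar{\boldsymbol{\lambda}})\neq\boldsymbol{0}$ the flow would instantaneously leave both slices (contradicting $M\subseteq E$ unless $\bar{\boldsymbol{\lambda}}=\boldsymbol{\lambda}^*$), forcing $\nabla_{\boldsymbol{x}}\boldsymbol{F}(\boldsymbol{x}^*,\bar{\boldsymbol{\lambda}})=\boldsymbol{0}$ as well. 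Thus every point of $M$ is a critical point of the convex-concave $\boldsymbol{F}$, hence a global saddle point, giving $M\subseteq\text{Saddle}(\boldsymbol{F})$.

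Finally I would upgrade to convergence to a single point. The entire Lyapunov estimate above holds verbatim with $\boldsymbol{z}^*$ replaced by \emph{any} $\boldsymbol{p}\in\text{Saddle}(\boldsymbol{F})$, so $V_{\boldsymbol{p}}(\boldsymbol{z}(t))$ is nonincreasing along the trajectory for every such $\boldsymbol{p}$. Let $\bar{\boldsymbol{z}}$ be an $\omega$-limit point of the bounded trajectory; by LaSalle and the previous step, $\bar{\boldsymbol{z}}\in M\subseteq\text{Saddle}(\boldsymbol{F})$. Applying the monotonicity with $\boldsymbol{p}=\bar{\boldsymbol{z}}$, the scalar $V_{\bar{\boldsymbol{z}}}(\boldsymbol{z}(t))$ is nonincreasing and tends to $0$ along the subsequence converging to $\bar{\boldsymbol{z}}$; a nonincreasing function possessing a subsequence tending to $0$ tends to $0$, so $\boldsymbol{z}(t)\to\bar{\boldsymbol{z}}$. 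This establishes global asymptotic stability of $\text{Saddle}(\boldsymbol{F})$ with convergence of trajectories to a point, as claimed.
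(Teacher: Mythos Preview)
Your proof is correct and complete. Note, however, that the paper does not actually prove this lemma: it is imported verbatim as \cite[Corollary 4.2]{cherukuri2017saddle} and used as a black box. What you have written is, in essence, a reconstruction of the argument in that reference: the quadratic distance-to-a-saddle Lyapunov function, the decomposition of $-\dot V_{\boldsymbol{z}^*}$ into the two first-order gaps plus the two saddle gaps, LaSalle to reach the largest invariant set in $\{\dot V_{\boldsymbol{z}^*}=0\}$, the strict convex-concavity to pin that set inside $\{\boldsymbol{x}=\boldsymbol{x}^*\}\cup\{\boldsymbol{\lambda}=\boldsymbol{\lambda}^*\}$, an invariance argument to conclude every point of $M$ is a critical point and hence a saddle, and finally the standard ``monotone distance to every saddle'' trick to upgrade set-convergence to pointwise convergence.

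One minor remark on the crux step: your phrasing ``the flow would instantaneously leave both slices'' is the right idea but a touch informal. The clean way to say it is by continuity: if $(\boldsymbol{x}^*,\bar{\boldsymbol{\lambda}})\in M$ with $\bar{\boldsymbol{\lambda}}\neq\boldsymbol{\lambda}^*$, then $\boldsymbol{\lambda}(t)\neq\boldsymbol{\lambda}^*$ for all small $t$, so invariance of $M\subseteq E\subseteq\{\boldsymbol{x}=\boldsymbol{x}^*\}\cup\{\boldsymbol{\lambda}=\boldsymbol{\lambda}^*\}$ forces $\boldsymbol{x}(t)\equiv\boldsymbol{x}^*$ on that interval, hence $\dot{\boldsymbol{x}}(0)=-\nabla_{\boldsymbol{x}}\boldsymbol{F}(\boldsymbol{x}^*,\bar{\boldsymbol{\lambda}})=\boldsymbol{0}$; together with $\nabla_{\boldsymbol{\lambda}}\boldsymbol{F}(\boldsymbol{x}^*,\bar{\boldsymbol{\lambda}})=\boldsymbol{0}$ (from $c=0$) this makes $(\boldsymbol{x}^*,\bar{\boldsymbol{\lambda}})$ a critical point of the convex-concave $\boldsymbol{F}$, i.e., a global saddle. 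With that tightening the argument is airtight.
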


Finally, the proof of Theorem \ref{theorem:saddle_point_theorem} is provided below.
\begin{proof}[\textbf{Proof of Theorem \ref{theorem:saddle_point_theorem}}]

By Lemma \ref{lemma:saddle_point}, every equilibrium of the dynamics \eqref{eq:algorithm_primal_dual_compact} is a global saddle point of the augmented Lagrangian $\boldsymbol{F}$. Then by Lemma \ref{lemma:saddle_point_to_optimal_solution}, every equilibrium is a globally optimal solution to the primal problem \eqref{eq:problem_equivalent}. The next step is to show that the proposed update rule \eqref{eq:algorithm_primal_dual_single} drives $(\boldsymbol{x}, \boldsymbol{\lambda})$ to $\text{Saddle}(\boldsymbol{F})$.

To show the asymptotic convergence, $\boldsymbol{f} = \sum_{i = 1}^m \boldsymbol{f}_i$ is assumed to be strictly convex in $\mathbb{R}^{mn}$. $\boldsymbol{f}$ is continuously differentiable since every $\boldsymbol{f}_i$ is continuously differentiable by assumption. By Definition \ref{definition:convex-concave}, $\boldsymbol{F}$ is globally strictly convex-concave because $\boldsymbol{f}$ is globally strictly convex and $\boldsymbol{F}$ is linear in its second argument $\boldsymbol{\lambda}$. Hence, $\text{Equilibrium}(\boldsymbol{F})$ is globally asymptotically stable, and the trajectories starting from any initial points given the update rule \eqref{eq:algorithm_primal_dual_single} converge to one point, i.e. one of the global optima of the equivalent problem \eqref{eq:problem_equivalent}.

Then with Lemma \ref{lemma:edge_agreement}, given any $\boldsymbol{x}(0)$ and the update rule \eqref{eq:algorithm_primal_dual_single}, the state $\boldsymbol{x}(t) \triangleq \text{col}\{ \boldsymbol{x}_1(t), \cdots, \boldsymbol{x}_m(t) \} \in \mathbb{R}^{mn}$ converges asymptotically to one of the globally optimal solutions $\boldsymbol{x}^*$ of \eqref{problem_interest}.
This completes the proof.
\end{proof}

Theorem \ref{theorem:saddle_point_theorem} requires the convexity of $\boldsymbol{f}_i$ to allow each $\boldsymbol{x}_i$ converges globally to $\boldsymbol{x}_i^*$. Without the convexity of each $\boldsymbol{f}_i$, one may only obtain locally asymptotic convergence.

\subsection{Special Case for Edge-Agreement Only}

To further highlight the connection between consensus and edge agreements, this subsection presents a special case for the edge-agreement-only problem. For this problem, a distributed algorithm without involving Lagrangian multipliers is given by
\begin{equation} \label{eq:update_only_edge}
\dot{\boldsymbol{x}}_i(t)= - \textstyle\sum_{j\in \mathcal{N}_i}P_{ij}(\boldsymbol{x}_i(t)-\boldsymbol{x}_j(t)-\bar{\boldsymbol{b}}_{ij}), \ \forall i \in \mathcal{V}.
\end{equation}

Although Ref. \cite{rai2023edge} has developed a discrete-time algorithm to achieve edge agreements, the above update \eqref{eq:update_only_edge} provides a continuous-time, distributed algorithm to solve the same problem.
Convergence of the proposed update \eqref{eq:update_only_edge} is given by the following corollary.

\begin{corollary}[\textbf{Edge-Agreement-Only}]\label{theorem:edge_only}
Under Assumption \ref{assum:existence}, Assumption \ref{assum:consistency}, and Assumption \ref{assump:well_configured}, the distributed update in continuous-time \eqref{eq:update_only_edge} drives each $\boldsymbol{x}_i(t)$ to converge exponentially fast to a constant vector $\boldsymbol{x}_i^*$, $\forall i \in \mathcal{V}$, which reach the edge agreement as defined in \eqref{eq:edge_agreement_require}.
\end{corollary}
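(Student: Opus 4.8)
The plan is to exploit that \eqref{eq:update_only_edge} is a \emph{linear} autonomous system and to analyze it directly through the spectrum of the associated matrix. First I would rewrite the per-agent update \eqref{eq:update_only_edge} in the stacked form $\dot{\boldsymbol{x}}(t) = -\bar{H}^{\prime}\bar{P}(\bar{H}\boldsymbol{x}(t) - \bar{\boldsymbol{b}})$, which is exactly the $\boldsymbol{\lambda}$-free part of \eqref{eq:algorithm_primal_dual_compact}; this rewriting is legitimate by the definitions \eqref{eq:mat_define}--\eqref{eq:H_bar_define} together with Lemma~\ref{lemma:edge_agreement}. By Assumption~\ref{assum:existence} there is a point $\boldsymbol{x}^*$ satisfying the edge agreements, hence $\bar{P}(\bar{H}\boldsymbol{x}^* - \bar{\boldsymbol{b}}) = \boldsymbol{0}$ by Lemma~\ref{lemma:edge_agreement}, so $\bar{H}^{\prime}\bar{P}(\bar{H}\boldsymbol{x}^* - \bar{\boldsymbol{b}}) = \boldsymbol{0}$, i.e.\ $\boldsymbol{x}^*$ is an equilibrium of the dynamics. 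Introducing the error $\boldsymbol{e} \triangleq \boldsymbol{x} - \boldsymbol{x}^*$ and using $\bar{P}^2 = \bar{P} = \bar{P}^{\prime}$, the error obeys $\dot{\boldsymbol{e}}(t) = -M\boldsymbol{e}(t)$ with $M \triangleq \bar{H}^{\prime}\bar{P}\bar{H} = (\bar{P}\bar{H})^{\prime}(\bar{P}\bar{H})$, which is symmetric and positive semidefinite.

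Next I would split $\boldsymbol{e}$ along the orthogonal decomposition $\mathbb{R}^{mn} = \text{ker}\,M \oplus \text{image}\,M$, writing $\boldsymbol{e} = \boldsymbol{e}_0 + \boldsymbol{e}_1$ with $\boldsymbol{e}_0 \in \text{ker}\,M$ and $\boldsymbol{e}_1 \in \text{image}\,M$. Since $M\boldsymbol{e} = M\boldsymbol{e}_1 \in \text{image}\,M$ and $\text{image}\,M = (\text{ker}\,M)^{\perp}$, the two components decouple: $\dot{\boldsymbol{e}}_0 \equiv \boldsymbol{0}$ and $\dot{\boldsymbol{e}}_1 = -M\boldsymbol{e}_1$, where all nonzero eigenvalues of $M$ are bounded below by some $\lambda_2 > 0$. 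Hence $\tfrac{d}{dt}\|\boldsymbol{e}_1\|^2 \leq -2\lambda_2\|\boldsymbol{e}_1\|^2$, so $\|\boldsymbol{e}_1(t)\| \leq e^{-\lambda_2 t}\|\boldsymbol{e}_1(0)\|$ and $\boldsymbol{e}(t) \to \boldsymbol{e}_0(0)$ exponentially fast; thus $\boldsymbol{x}(t) \to \boldsymbol{x}^{**} \triangleq \boldsymbol{x}^* + \boldsymbol{e}_0(0)$ at an exponential rate, and taking the $i$-th block gives $\boldsymbol{x}_i(t) \to \boldsymbol{x}_i^{**}$ exponentially, which is the claimed convergence of each agent to a constant vector. (Equivalently, one may track $\boldsymbol{y}(t) \triangleq \bar{P}(\bar{H}\boldsymbol{x}(t) - \bar{\boldsymbol{b}})$, show $\|\boldsymbol{y}(t)\|$ decays exponentially, and conclude that $\boldsymbol{x}(t) = \boldsymbol{x}(0) - \int_0^t \bar{H}^{\prime}\boldsymbol{y}(s)\,ds$ converges because the integrand is exponentially bounded.)

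It remains to verify that the limit satisfies the edge agreements, and this is the one non-mechanical point. Since $\boldsymbol{e}_0(0) \in \text{ker}\,M$, we have $M(\boldsymbol{x}^{**} - \boldsymbol{x}^*) = \boldsymbol{0}$, hence $\bar{H}^{\prime}\bar{P}(\bar{H}\boldsymbol{x}^{**} - \bar{\boldsymbol{b}}) = \bar{H}^{\prime}\bar{P}(\bar{H}\boldsymbol{x}^* - \bar{\boldsymbol{b}}) = \boldsymbol{0}$; by Assumption~\ref{assump:well_configured} this forces $\bar{P}(\bar{H}\boldsymbol{x}^{**} - \bar{\boldsymbol{b}}) = \boldsymbol{0}$, and Lemma~\ref{lemma:edge_agreement} then yields $A_{ij}(\boldsymbol{x}_i^{**} - \boldsymbol{x}_j^{**}) = \boldsymbol{b}_{ij}$ for all $(i,j) \in \mathcal{E}$. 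The main obstacle is precisely closing this gap: the dynamics only drives the equilibrium condition $\bar{H}^{\prime}\bar{P}(\cdot) = \boldsymbol{0}$, which is a priori weaker than the edge-agreement condition $\bar{P}(\cdot) = \boldsymbol{0}$, and Assumption~\ref{assump:well_configured} is exactly what makes the two coincide. I would also note that this corollary is not a mere specialization of Theorem~\ref{theorem:saddle_point_theorem}: a constant objective is not strictly convex, so Lemma~\ref{lemma:global_asymptotic_stability_convex_concave} does not apply, and \eqref{eq:update_only_edge} carries no dual variable; the separate argument above in fact yields the stronger \emph{exponential} rate rather than merely asymptotic convergence.
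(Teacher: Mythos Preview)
Your proof is correct and follows the same linear-systems route as the paper: rewrite \eqref{eq:update_only_edge} in the compact form $\dot{\boldsymbol{x}} = -\bar{H}'\bar{P}(\bar{H}\boldsymbol{x} - \bar{\boldsymbol{b}})$, reduce to $\dot{\boldsymbol{e}} = -M\boldsymbol{e}$ with $M$ symmetric positive semidefinite, obtain exponential decay on the nontrivial eigenspace, and invoke Assumption~\ref{assump:well_configured} to close the gap between $\bar{H}'\bar{P}(\cdot)=\boldsymbol{0}$ and $\bar{P}(\cdot)=\boldsymbol{0}$. The only cosmetic difference is the choice of error variable: the paper sets $\boldsymbol{e}(t) \triangleq \bar{P}(\bar{H}\boldsymbol{x}(t) - \bar{\boldsymbol{b}}) \in \mathbb{R}^{\bar{m}n}$ (the constraint violation, living in edge space) and works with $M = \bar{P}\bar{H}\bar{H}'\bar{P}$, whereas you set $\boldsymbol{e}(t) \triangleq \boldsymbol{x}(t) - \boldsymbol{x}^* \in \mathbb{R}^{mn}$ (the state error) and work with $M = \bar{H}'\bar{P}\bar{H}$, then split along $\ker M \oplus \mathrm{image}\,M$. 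Your parenthetical alternative tracking $\boldsymbol{y}(t) = \bar{P}(\bar{H}\boldsymbol{x}(t) - \bar{\boldsymbol{b}})$ is in fact exactly the paper's argument. Your version has the mild advantage of exhibiting the limit $\boldsymbol{x}^{**} = \boldsymbol{x}^* + \boldsymbol{e}_0(0)$ explicitly, while the paper's version avoids fixing a particular reference $\boldsymbol{x}^*$ up front; either way the content is the same.
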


\begin{proof}
Since $P_{ij}=P_{ji}$ and $\boldsymbol{b}_{ij}=-\boldsymbol{b}_{ji}$ because of Assumption \ref{assum:consistency}, then the distributed update in \eqref{eq:update_only_edge} can be written in the following compact form
\begin{equation} \label{eq:update_only_edge_compact}
\dot{\boldsymbol{x}} =- \bar{H}' \bar{P} ( \bar{H} \boldsymbol{x}-\bar{\boldsymbol{b}} ),
\end{equation}
where $\bar{P}$, $\bar{\boldsymbol{b}}$ and $\bar{H}$ are as defined in \eqref{eq:diag_mat_define}, \eqref{eq:edge_b_define} and \eqref{eq:H_bar_define}, respectively.
Define
\begin{equation} \label{eq:error_edge_only}
\boldsymbol{e}(t) \triangleq \bar{P} ( \bar{H} \boldsymbol{x}-\bar{\boldsymbol{b}}).
\end{equation}
With $\bar{P}^2=\bar{P}$ and \eqref{eq:update_only_edge_compact}, \eqref{eq:error_edge_only} leads to
\begin{equation} \label{eq:error_edge_only_dynamics}
\dot{\boldsymbol{e}} = - M \boldsymbol{e},
\end{equation}
where $M \triangleq \bar{P} \bar{H} \bar{H}' \bar{P}$.
Since $\bar{P}$ is symmetric, one notes right away that $M=(\bar{H}' \bar{P})' (\bar{H}' \bar{P})$ is positive semi-definite.
By Assumption \ref{assump:well_configured}, $\bar{H}' \bar{P} \boldsymbol{e}(t) \rightarrow \boldsymbol{0}$ exponentially fast. Then given the definition of $\boldsymbol{e}(t)$ in \eqref{eq:error_edge_only} and $\bar{P}^2 = \bar{P}$, $\bar{H}' \bar{P} (\bar{H} \boldsymbol{x}(t)-\bar{\boldsymbol{b}}) \rightarrow \boldsymbol{0}$ exponentially fast. It follows that there exists a constant vector $\boldsymbol{x}^*$ such that $\boldsymbol{x}(t) \rightarrow \boldsymbol{x}^*$ exponentially fast and $ \bar{H}' \bar{P} ( \bar{H} \boldsymbol{x}^*-\bar{\boldsymbol{b}} ) = \boldsymbol{0},$ which with Assumption \ref{assump:well_configured}  implies $\bar{P} ( \bar{H} \boldsymbol{x}^*-\bar{\boldsymbol{b}} )= \boldsymbol{0}$. From this with Assumption \ref{assum:existence} and Lemma \ref{lemma:edge_agreement}, one concludes that each $\boldsymbol{x}_i(t) \rightarrow \boldsymbol{x}_i^*$, $\forall i \in \mathcal{V}$, exponentially fast and all  $\boldsymbol{x}_i^*$ satisfy the edge agreement in \eqref{eq:edge_agreement_require}.
\end{proof}

\section{Simulations} \label{section:example}

This section includes two simulations to validate the proposed algorithms \eqref{eq:algorithm_primal_dual_single} and \eqref{eq:update_only_edge} for solving multi-agent optimization under edge agreements \eqref{problem_interest} and the edge-agreement-only problem \eqref{eq:edge_agreement_require}, respectively.

\subsection{Multi-Agent Formation Control}

This subsection applies the proposed distributed algorithm for edge-agreement-only \eqref{eq:update_only_edge} to a multi-agent formation control problem \cite{cao2011formation}. 
The multi-agent formation here is based on relative positions, which can be formulated as a group of edge agreements.
Suppose there are $m=4$ agents. Denote $\boldsymbol{x}_i \in \mathbb{R}^2$ as the position for agent $i$, which is initialized randomly.

Agents share and update their states (positions in this case) and should eventually achieve some desired edge agreements \eqref{example:desired_edge_agree} and hence a spatial formation in Fig. \ref{fig:desired_formation} shall be maintained.
The desired formation is shown in Fig. \ref{fig:desired_formation} with the edge set $\mathcal{E}=\{ (1,2), (2,3), (3,1), (3,4) \}$.
\begin{figure}[h]
\centering
\includegraphics[width=0.20\textwidth]{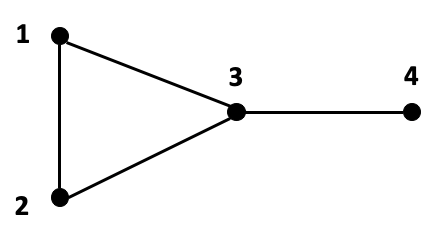}
\caption{Desired formation for four agents}
\label{fig:desired_formation}
\end{figure}
The desired formation can be represented as a group of edge agreements, i.e.
\begin{equation} \label{example:desired_edge_agree}
    A_{ij}(\boldsymbol{x}_{i} - \boldsymbol{x}_{j}) = \boldsymbol{b}_{ij}, \ \forall (i,j) \in \mathcal{E},
\end{equation}
where
\begin{equation*}
\begin{split}
    &A_{ij} = I_2, \ \forall (i,j) \in \mathcal{E}, \\
    &\boldsymbol{b}_{12} = -\boldsymbol{b}_{21} = \matt{0 & 3}^{\top}, \boldsymbol{b}_{23} = -\boldsymbol{b}_{32} = \matt{-2.6 & -1.5}^{\top}, \\
    &\boldsymbol{b}_{31} = -\boldsymbol{b}_{13} = \matt{2.6 & -1.5}^{\top}, \boldsymbol{b}_{34} = -\boldsymbol{b}_{43} = \matt{-3 & 0}^{\top}. \\
\end{split}
\end{equation*}
The oriented incidence matrix is
\begin{equation*}
    H = \matt{ 1 & -1 & 0 & 0 \\ 0 & 1 & -1 & 0 \\ 1 & 0 & -1 & 0 \\ 0 & 0 & 1 & -1}.
\end{equation*}

To validate Corollary \ref{theorem:edge_only}, each entry of $\boldsymbol{x}(0) \in \mathbb{R}^{8}$ is randomly sampled from a uniform distribution $[-10, 10]$.
To measure the closeness of all agents' states to the desired states satisfying edge agreements, introduce the following index
\begin{equation}
    V(t) = \frac{1}{2}\textstyle\sum_{(i,j) \in \mathcal{E}} ||A_{ij}(\boldsymbol{x}_i(t) - \boldsymbol{x}_j(t))-\boldsymbol{b}_{ij}||^2,
\end{equation}
where $V(t) \geq 0$ and $V(t)=0$ if and only if all the edge agreements in \eqref{eq:edge_agreement_require} are achieved. The simulation result is obtained by applying the update \eqref{eq:update_only_edge} with the {\tt{ode45}} solver of MATLAB. The process of agents achieving a formation is shown in Fig. \ref{fig:formation_result_only}. As shown in Fig. \ref{fig:v_plot}, the proposed algorithm \eqref{eq:update_only_edge} enables all agents to exponentially achieve the formation defined by the edge agreements in \eqref{example:desired_edge_agree}.

\begin{figure}[h]
\centering
\includegraphics[width=0.33\textwidth]{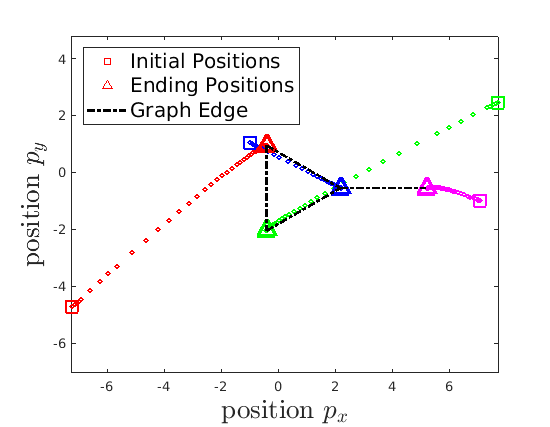}
\caption{Formation of four agents. Trajectories in red, green, blue, and magenta indicate agents 1 - 4, respectively. Squares are agents' initial positions. Triangles are agents' ending positions. Black dashed lines indicate graph edges.}
\label{fig:formation_result_only}
\end{figure}

\begin{figure}[h]
\centering
\includegraphics[width=0.33\textwidth]{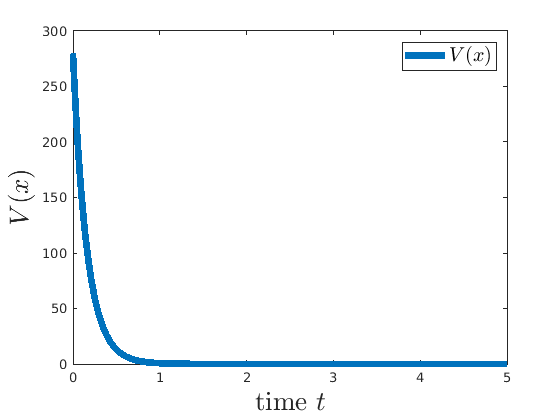}
\caption{Evolution of $V(t)$ for multi-agent formation control}
\label{fig:v_plot}
\end{figure}

\subsection{Multi-Agent Optimization with Formation Control}

This section applies the proposed distributed algorithm \eqref{eq:algorithm_primal_dual_single} to a multi-agent optimization problem with formation control.
This problem adopts the same setting about agents and network topology.
In addition, agents share and update their states and cooperatively minimize a global objective function by individually minimizing their local objective functions. Similarly, all the agents should eventually achieve some desired edge agreements \eqref{example:desired_edge_agree}, and hence a formation in Fig. \ref{fig:desired_formation} shall be maintained.

The local objective functions are defined as follows:
\begin{equation}
\begin{split}
&\boldsymbol{f}_1(\boldsymbol{x}_1) = ||\boldsymbol{x}_1||^2, \ \boldsymbol{f}_2(\boldsymbol{x}_2) = ||\boldsymbol{x}_2 - \matt{2 & 2}^{\top}||^2, \\
& \boldsymbol{f}_3(\boldsymbol{x}_3) = ||\boldsymbol{x}_2 + \matt{3 & 3}^{\top}||^2, \ \boldsymbol{f}_4(\boldsymbol{x}_4) = \textstyle\sum_{i=1}^2 e^{\boldsymbol{x}_4[i]},
\end{split}
\end{equation}
where $\boldsymbol{x}_4[i] \in \mathbb{R}$ denotes the $i$-th entry of vector $\boldsymbol{x}_4$.
$\boldsymbol{f}_i$ defines a specific point/region that each agent $i$ is desired to arrive at.
Note that $\boldsymbol{f}(\boldsymbol{x}) = \sum_{i=1}^m \boldsymbol{f}_i(\boldsymbol{x}_i)$ is strictly convex in $\boldsymbol{x}$.

To validate Theorem \ref{theorem:saddle_point_theorem}, each entry of $\boldsymbol{x}(0) \in \mathbb{R}^{8}$ is randomly sampled from a uniform distribution $[-10, 10]$. The global optimum $\boldsymbol{x}^* \in \mathbb{R}^8$ is obtained by solving the problem of interest \eqref{problem_interest} with MATLAB {\tt{fmincon}} solver in a centralized manner. Define the following function:
\begin{equation}
    W(t) = \frac{1}{2}\textstyle\sum_{i=1}^4 ||\boldsymbol{x}_i(t) - \boldsymbol{x}_i^*||^2,
\end{equation}
where $W(t)=0$ if and only if $\boldsymbol{x}_i(t) = \boldsymbol{x}_i^*$ for all $i \in \mathcal{V}$. The simulation result is obtained by applying the update \eqref{eq:algorithm_primal_dual_single} with the {\tt{ode45}} solver of MATLAB. The process of agents achieving a formation is shown in Fig. \ref{fig:formation_result}. As shown in Fig. \ref{fig:w_plot}, $\boldsymbol{x}(t)$ converges to the global optimum $\boldsymbol{x}^*$ asymptotically. Moreover, the constant slope of $W(t)$ on the logarithmic scale indicates the exponential convergence of the algorithm in this strictly convex function $\boldsymbol{f}(\boldsymbol{x})$.

\begin{figure}[h]
\centering
\includegraphics[width=0.33\textwidth]{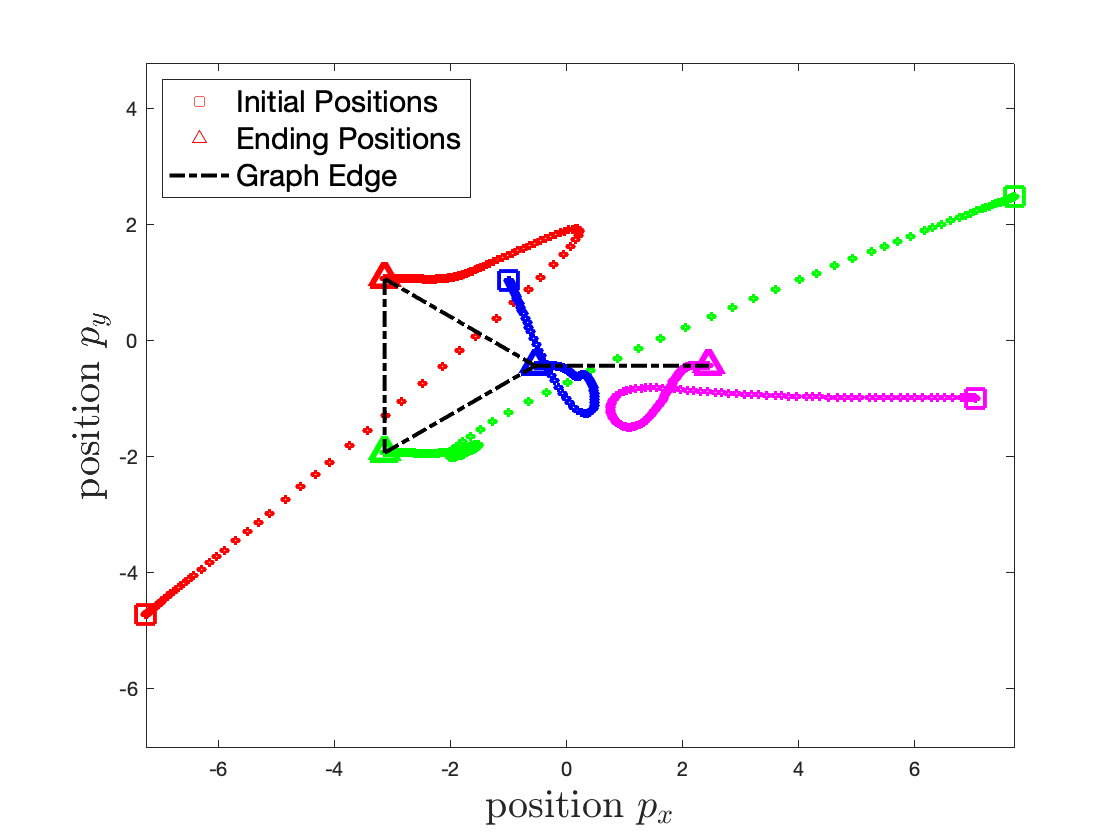}
\caption{Formation of four agents to minimize a global objective. Trajectories in red, green, blue, and magenta indicate agents 1 - 4, respectively. Squares are agents' initial positions. Triangles are agents' ending positions. Black dashed lines indicate graph edges.}
\label{fig:formation_result}
\end{figure}

\begin{figure}[h]
\centering
\includegraphics[width=0.33\textwidth]{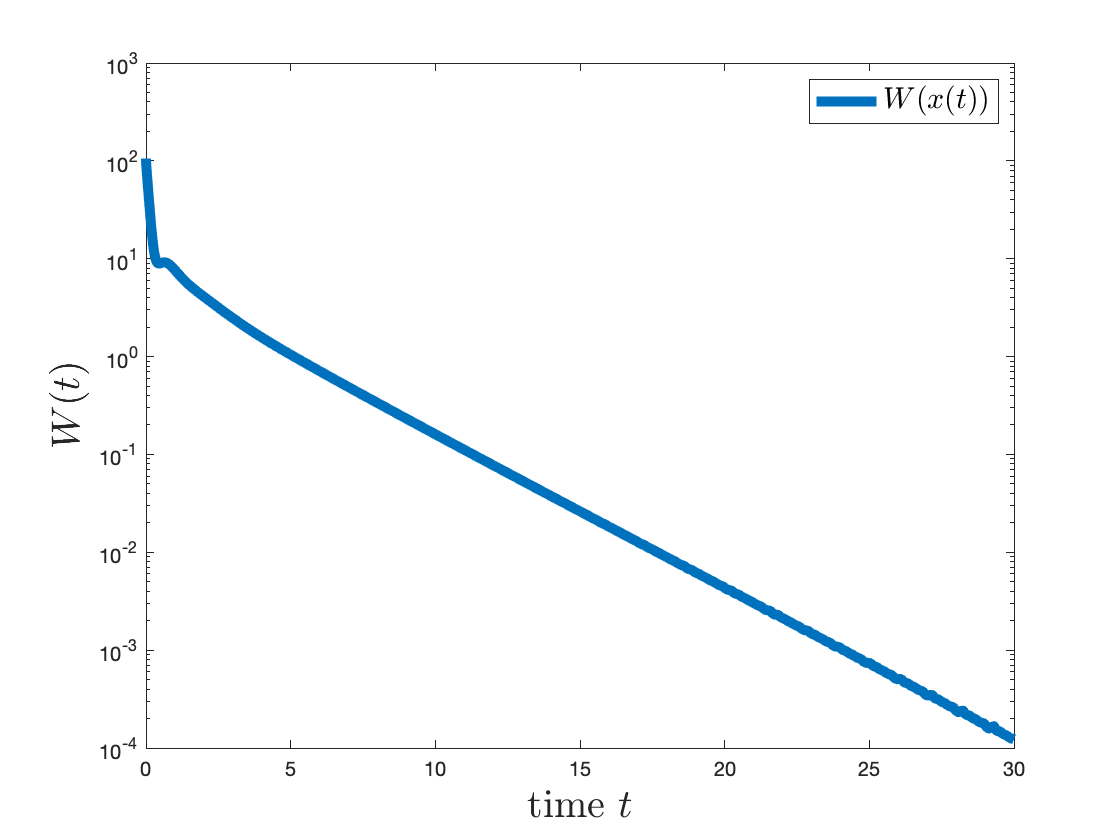}
\caption{Evolution of $W(t)$ on the logarithmic scale for multi-agent optimization with formation control}
\label{fig:w_plot}
\end{figure}

\section{Conclusion \& Discussion} \label{section:discussion}

This paper has introduced the concept of edge agreement, which is more general than consensus. Such edge agreements can enable more flexible coordination among multi-agent systems by enforcing a linear constraint to each pair of neighboring agents, rather than a global requirement of consensus in a multi-agent system. Based on the concept of edge agreements, the paper has developed a distributed algorithm to solve a new multi-agent optimization problem, which is a promising tool for multi-agent formation control.

From the similarity between edge agreement and consensus, one might transfer the existing consensus analysis techniques and generalize some useful analysis to edge agreement, such as convergence rate analysis.
Other future work includes the nonconvex optimization analysis, the generalization of the proposed algorithm to a discrete-time update with time-varying and directed networks, applying multi-agent optimization under edge agreements to distributed MPC problems \cite{stewart2010cooperative,fang2016cooperative,10155935}, and finding more applications of edge agreements.

\bibliographystyle{elsarticle-num}

\bibliography{reference}

\end{document}